\newtheorem{theorem}{Theorem}[section]
\newtheorem{lemma}[theorem]{Lemma}
\newtheorem{corollary}[theorem]{Corollary}
\newtheorem{proposition}[theorem]{Proposition}
\newcommand{\acat}[1]{\mathsf{a}_{#1}}
\newcommand{\cacat}[1]{\widehat{\mathsf{a}}_{#1}}
\newcommand{\cat}{\mathcal{A}}
\newcommand{\scat}{\mathbf{\cat}(\sfam)}
\newcommand{\aff}{\mathbb{A}}
\newcommand{\defm}[1]{\mathsf{Def}_{#1}}
\newcommand{\fm}[1]{\mathsf{#1}}
\newcommand{\sfam}{\fm{S}}
\newcommand{\infdef}{\mathrm{T}^1}
\newcommand{\sets}{\mathsf{Sets}}
\newcommand{\grdhol}{\mathbf{grHol}_{D}}
\DeclareMathOperator{\coker}{coker}
\DeclareMathOperator{\enm}{End}
\DeclareMathOperator{\ext}{Ext}
\DeclareMathOperator{\hmm}{Hom}
\DeclareMathOperator{\im}{im}
\DeclareMathOperator{\mor}{Mor}
\DeclareMathOperator*{\osum}{\oplus}
\DeclareMathOperator{\spec}{Spec}
\DeclareMathOperator{\diff}{Diff}
\DeclareMathOperator{\re}{Re}
\newcommand{\cc}{\mathbb{C}}
\newcommand{\extn}{\mathbf{Ext}(\sfam)}
\newcommand{\extncl}[1]{\mathbf{Ext}(\sfam,{#1})}
\newcommand{\extniso}[1]{\mathcal{E}(\sfam,{#1})}
\newcommand{\mcat}[1]{{}\mathbf{Mod}_{#1}}
\newcommand{\n}{\mathbf{N}}
\newcommand{\uc}{\textbf{UC}}
\newcommand{\vect}[1]{\underline{#1}}
\newcommand{\weyl}{A_1(k)}
\newcommand{\z}{\mathbf{Z}}
\title[Iterated extensions]{Iterated Extensions and Uniserial Length Categories}
\author{Eivind Eriksen}
\begin{document}

\maketitle

\begin{abstract}
In this paper, we study length categories using iterated extensions. We
consider the problem of classifying all indecomposable objects in a length
category, and the problem of characterizing those length categories that are
uniserial. We solve the last problem, and obtain a necessary and sufficient
criterion for uniseriality under weak assumptions. This criterion turns out
to be known by Amdal and Ringdal already in 1968; we give a new proof that is
both elementary and constructive. The first problem is the most fundamental 
one, and its general solution is \enquote{the main and perhaps hopeless 
purpose of representation theory} according to Gabriel. We solve the problem 
in the case when the length category is uniserial, using our constructive 
methods. As an application, we classify all graded holonomic $D$-modules on a 
monomial curve over the complex numbers, obtaining the most explicit results 
over the affine line, when $D$ is the first Weyl algebra. Finally, we show 
that the iterated extensions are completely determined by the noncommutative 
deformations of its simple factors. This tells us precisely what we can learn 
about a length category by studying its species; it gives the tangent space 
of the noncommutative deformation functor, or the infinitesimal deformations, 
but not the obstructions for lifting these deformations.
\end{abstract}

\section*{Introduction}

Let $\sfam = \{ S_{\alpha}: \alpha \in I \}$ be a family of non-zero, pairwise
non-isomorphic objects in an Abelian $k$-category $\cat$, where $k$ is a
field. We consider the minimal full subcategory $\scat \subseteq \cat$ that
contains $\sfam$ and is closed under extensions. The family $\sfam$ is called
a family of orthogonal points if $\enm(S_{\alpha})$ is a division algebra and
$\hmm(S_{\alpha},S_{\beta}) = 0$ for all $\alpha, \beta \in I$ with $\alpha
\neq \beta$. In this case, $\scat \subseteq \cat$ is a length category with
$\sfam$ as its simple objects.

We use the category $\extn$ of \emph{iterated extensions} of $\sfam$ to study
the length category $\scat$. An iterated extension of $\sfam$ is a couple
$(X,C)$ where $X$ is an object in $\cat$, and $C$ is a cofiltration
    \[ X = C_n \xrightarrow{f_n} C_{n-1} \to \dots \to C_2
    \xrightarrow{f_2} C_1 \xrightarrow{f_1} C_0 = 0 \]
where $f_i: C_i \to C_{i-1}$ is surjective and $K_i = \ker(f_i) \cong
S_{\alpha(i)}$ with $\alpha(i) \in I$ for $1 \le i \le n$. Hence the assignment
$(X,C) \mapsto X$ defines a forgetful functor $\extn \to \scat$. When we work
with the category $\extn$ of iterated extensions, the \emph{order vector}
$\vect \alpha = ( \alpha(1), \dots, \alpha(n) ) \in I^n$ is an invariant, in
addition to the usual invariants in the length category $\scat$ such as the
length $n$, the simple factors $\{ K_1, \dots, K_n \}$, and their
multiplicities.

An important special case is when $\cat = \mcat A$ is the category of modules
over an associative $k$-algebra $A$, and $\sfam$ is a subset of the simple
$A$-modules. If $\sfam$ is the family of all simple modules, then $\scat$ is
the category of all modules of finite length. There are also many other
interesting applications, for example when $\cat$ is the category of graded
modules over a graded $k$-algebra, or the category of coherent sheaves over
a $k$-scheme. Note that any length category is exact equivalent to an exact
subcategory of a module category. Nevertheless, it is often better to work
directly in the Abelian category of interest than to use such an embedding.

We say that $\scat$ is a uniserial length category if any indecomposable
object in $\scat$ has a unique composition series, and that at point $S$ in
$\sfam$ is $k$-rational if $\enm(S) = k$. When $\sfam$ is a family of
$k$-rational orthogonal points, we show that $\scat$ is a uniserial length
category if and only if the family $\sfam$ satisfies the condition
\begin{align*} \tag{\uc}
\sum_{\beta \in I} \, \dim_k \ext^1_{\cat}(S_{\alpha},S_{\beta}) \le 1 &
\quad \text{for all $\alpha \in I$} \\
\sum_{\alpha \in I} \, \dim_k \ext^1_{\cat}(S_{\alpha},S_{\beta}) \le 1 &
\quad \text{for all $\beta \in I$}
\end{align*}
It turns out that the condition (\uc) and the characterization of uniserial
length categories was known already in the 60's; see Section 8.3 in Gabriel
\cite{gabr73}. As far as we know, it first appeared in Amdal, Ringdal
\cite{amda-ring68}, where it is stated without proof. We give an elementary 
and constructive proof of the result that $\scat$ is uniserial if and only 
if (\uc) is satisfied, using the category $\extn$ of iterated extensions. 
In fact, after showing that the condition is necessary, we explicitly 
construct all indecomposable objects in $\scat$ when (\uc) holds, and prove 
that these objects are uniserial.

\begin{ithm}
Let $\sfam = \{ S_{\alpha}: \alpha \in I \}$ be a family of orthogonal
$k$-rational points in an Abelian $k$-category $\cat$. If $\sfam$ satisfies
(\uc), then the indecomposable objects in $\scat$ of length $n$ are given by
$\{ X(\vect \alpha): \vect \alpha \in \mathcal J \}$, up to isomorphism in
$\scat$, where the subset $\mathcal J \subseteq I^n$ consists of the vectors
$\vect \alpha$ such that the following conditions hold:
\begin{enumerate}
\item
$\ext^1_{\cat}(S_{\alpha(i-1)},S_{\alpha(i)}) \neq 0$ for $2 \le i \le n$
\item
If $\sigma_i \in \ext^1_{\cat}(S_{\alpha(i-1)},S_{\alpha(i)})$ is non-zero
for $2 \le i \le n$, then the matric Massey product $\langle \sigma_2,
\sigma_3, \dots, \sigma_n \rangle$ is defined and contains zero.
\end{enumerate}
Moreover, the indecomposable objects $X(\vect \alpha)$ are uniserial, and
can be constructed from the family $\sfam$ and their extensions.
\end{ithm}

As an application, we show that the category $\grdhol$ of graded holonomic
$D$-modules is uniserial when $D = \diff(A)$ is the ring of differential
operators on a monomial curve $A$ defined over the field $k = \cc$ of complex
numbers. Moreover, we classify all indecomposable objects in $\grdhol$. We
build upon the results in Eriksen \cite{erik18}, where we studied this
category. We obtain the most explicit result in the case when $A = k[t]$ and
$D = \weyl$ is the first Weyl algebra. The classification is similar in the 
other cases, since all rings of differential operators on monomial curves are 
Morita equivalent.

\begin{ithm}
Let $D = \weyl$ be the first Weyl algebra. Then the category $\grdhol$ of
graded holonomic $D$-modules is uniserial, and the indecomposable $D$-modules
in $\grdhol$ are, up to graded isomorphisms and twists, given by
    \[ M(\alpha,n) = D/D \, (E-\alpha)^n, \quad M(\beta,n) = D/D \; \mathbf
    w(\beta,n) \]
where $n \ge 1$, $\alpha \in J^* = \{ \alpha \in k: 0 \le \re(\alpha) < 1,
\; \alpha \neq 0 \}$, $\beta \in \{ 0, \infty \}$, and $\mathbf
w(\beta,n)$ is the alternating word on $n$ letters in $t$ and $\partial$,
ending with $\partial$ if $\beta = 0$, and in $t$ if $\beta = \infty$.
\end{ithm}

In the last section, we prove that for a swarm $\sfam$ of orthogonal points
in an Abelian $k$-category $\cat$, the iterated extensions of the family
$\sfam$ are completely determined by the noncommutative deformations of its
simple factors. Hence the length category $\scat$ is also determined by these
deformations. If the noncommutative deformations are unobstructed, then they
are determined by the species of $\scat$. This is the case for modules over a
hereditary ring, such as the ring $D$ of differential operators on a monomial
curve over the complex numbers. In general, we need both the species of
$\scat$, which defines the noncommutative deformations on the tangent level,
and the obstructions for lifting these deformations, to determine the
iterated extensions in $\extn$.

\section{Iterated extensions}
\label{s:itextn}

Let $k$ be a field, let $\cat$ be an Abelian $k$-category, and let $\sfam
= \{ S_{\alpha}: \alpha \in I \}$ be a fixed family of non-zero, pairwise
non-isomorphic objects in $\cat$. In this section, we define the category
$\extn$ of \emph{iterated extensions} of the family $\sfam$, equipped with
a forgetful functor $\extn \to \scat$ into the minimal full subcategory
$\scat \subseteq \cat$ that contains $\sfam$ and is closed under
extensions, and study its properties.

An object of $\extn$ is a couple $(X,C)$, where $X$ is an object of the
category $\cat$ and $C$ is a cofiltration of $X$ in $\cat$ of the form
    \[ X = C_n \xrightarrow{f_n} C_{n-1} \to \dots \to C_2
    \xrightarrow{f_2} C_1 \xrightarrow{f_1} C_0 = 0 \]
where $f_i: C_i \to C_{i-1}$ is surjective and $K_i = \ker(f_i) \cong
S_{\alpha(i)}$ with $\alpha(i) \in I$ for $1 \le i \le n$. The integer $n
\ge 0$ is called the \emph{length}, the objects $K_1, \dots, K_n$ are called
the \emph{factors}, and the vector $\underline \alpha = ( \alpha_1, \dots,
\alpha_n)$ is called the \emph{order vector} of the iterated extension $(X,
C)$.

Let $(X,C)$ and $(X',C')$ be a pair of objects in $\extn$ of lengths $n,n'
\ge 0$. A morphism $\phi: (X,C) \to (X',C')$ in $\extn$ is a collection
$\{ \phi_i: 0 \le i \le N \}$ of morphisms $\phi_i: C_i \to C'_i$ in $\cat$
such that $\phi_{i-1} f_i = f'_i \phi_i$ for $1 \le i \le N$, where $N =
\max \{ n, n' \}$. By convention, $C_i = X$ for all $i>n$ and $C'_i = X'$
for all $i>n'$.

The category $\extn$ has a dual category defined by filtrations. An object
of this category is a couple $(X,F)$, where $X$ is an object of $\cat$ and
$F$ is a filtration of $X$ in $\cat$ of the form
    \[ 0 = F_n \subseteq F_{n-1} \subseteq \dots \subseteq F_0 = X \]
such that $K_i = F_{i-1}/F_i \cong S_{\alpha(i)}$ with $\alpha(i) \in I$
for $1 \le i \le n$. Given an object $(X,F)$ in the dual category, the
corresponding object in $\extn$ is $(X,C)$, where the cofiltration $C$ is
defined by $C_i = X/F_i$ for $0 \le i \le n$, with the natural surjections
$f_i: C_i \to C_{i-1}$. Conversely, if the object $(X,C)$ in $\extn$ is
given, then the corresponding filtration of $X$ is given by $F_i =
\ker(X \to C_i)$ for $0 \le i \le n$, where $X \to C_i$ is the composition
$f_{i+1} \circ \dots \circ f_n: C_n \to C_i$. It is clear from the
construction that the dual objects $(X,C)$ and $(X,F)$ have the same length,
the same factors, and the same order vector.

We recall that a short exact sequence $0 \to Y \to Z \to X \to 0$ in $\cat$
is called an extension of $X$ by $Y$, and that $\ext^1_{\cat}(X,Y)$ denotes
the set of all extensions of $X$ by $Y$, modulo equivalence. The set
$\ext^1_{\cat}(X,Y)$ has a natural $\enm_{\cat}(Y)$-$\enm_{\cat}(X)$
bimodule structure, inherited from the bimodule structure on $\hmm_{\cat}(X,
Y)$.

As the name suggests, the category $\extn$ can be characterized in terms of
extensions. In fact, for any object $(X,C)$ in $\extn$ of length $n$ and for
any integer $i$ with $2 \le i \le n$, the cofiltration $C$ induces a
commutative diagram
\[ \xymatrix{
0 \ar[r] & K_i \ar[r] & C_i \ar[r]^{f_i} & C_{i-1} \ar[r] & 0 \\
0 \ar[r] & K_i \ar[r] \ar[u] & Z \ar[r]_{f_i} \ar[u] & K_{i-1} \ar[r]
\ar[u] & 0
} \]
in $\cat$, where the rows are exact and $Z = f_i^{-1}(K_{i-1})$. We define
$\xi_i \in \ext^1_{\cat}(C_{i-1},K_i)$ and $\tau_i \in \ext^1_{\cat}(K_{i-1},
K_i)$ to be the extensions corresponding to the upper and lower row. By
construction, $\xi_i \mapsto \tau_i$ under the map $\ext^1_{\cat}(C_{i-1},
K_i) \to \ext^1_{\cat}(K_{i-1},K_i)$ induced by the inclusion $K_{i-1}
\subseteq C_{i-1}$. In particular, $C_2$ is an extension of $C_1 = K_1$ by
$K_2$, $C_3$ is an extension of $C_2$ by $K_3$, and in general, $C_{i+1}$ is
an extension of $C_i$ by $K_{i+1}$ for $1 \le i \le n-1$. It follows that $X
= C_n$ is obtained from the factors $\{ K_1, \dots, K_n \} \subseteq \sfam$
by an iterated use of extensions, and this justifies the name \emph{iterated
extensions}.

Let us consider the natural forgetful functor $\extn \to \cat$ given by $(X,
C) \mapsto X$, and the full subcategory $\scat \subseteq \cat$ defined in the
following way: An object $X$ in $\cat$ belongs to $\scat$ if there exists a
cofiltration $C$ of $X$ such that $(X,C)$ is an object of $\extn$. The
following lemma proves that $\scat \subseteq \cat$ is the minimal full
subcategory that contains $\sfam$ and is closed under extensions:

\begin{lemma}
Let $(X',C'), (X'',C'')$ be iterated extensions of the family $\sfam$. If
$X$ is an extension of $X'$ by $X''$ in $\cat$, then there is a cofiltration
$C$ of $X$ such that $(X,C)$ is an iterated extension of the family $\sfam$.
In particular, the full subcategory $\scat \subseteq \cat$ is closed under
extensions.
\end{lemma}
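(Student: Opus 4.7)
The plan is to work with the dual filtration description of iterated extensions introduced just above the lemma, since filtrations splice cleanly along a short exact sequence whereas a direct cofiltration construction would be awkward. Let $F'$ and $F''$ be the filtrations of $X'$ and $X''$ dual to the given cofiltrations $C'$ and $C''$, of lengths $n'$ and $n''$, and fix the extension $0 \to X'' \xrightarrow{\iota} X \xrightarrow{\pi} X' \to 0$. I will build a filtration $F$ of $X$ of length $n := n' + n''$ whose factors, read from top to bottom, are the factors of $X'$ followed by those of $X''$; passing to the dual cofiltration via $C_i := X/F_i$ then exhibits $X$ as an iterated extension of $\sfam$ with order vector equal to the concatenation of the order vectors of $(X',C')$ and $(X'',C'')$.

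Concretely, I pull $F'$ back to $X$ for the top $n'$ steps and push $F''$ in along $\iota$ for the bottom $n''$ steps:
\[ F_i := \pi^{-1}(F'_i) \text{ for } 0 \le i \le n', \qquad F_{n'+j} := \iota(F''_j) \text{ for } 0 \le j \le n''. \]
The two prescriptions coincide at $i = n'$ since $\pi^{-1}(0) = \ker(\pi) = \iota(X'') = \iota(F''_0)$, and the nesting together with the endpoint conditions $F_0 = X$, $F_n = 0$ are immediate. The subquotients in the top range should come out as $F_{i-1}/F_i \cong F'_{i-1}/F'_i \cong S_{\alpha'(i)}$ by the standard identity $\pi^{-1}(B)/\pi^{-1}(A) \cong B/A$ for nested subobjects $A \subseteq B \subseteq X'$, while in the bottom range $F_{n'+j-1}/F_{n'+j} \cong F''_{j-1}/F''_j \cong S_{\alpha''(j)}$ follows at once from monicity of $\iota$.

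The only step worth flagging as an obstacle is the pullback-subquotient identity $\pi^{-1}(B)/\pi^{-1}(A) \cong B/A$, but this is just a snake-lemma diagram chase valid in any abelian category (one factors $\pi^{-1}(B) \to B \to B/A$ and identifies its kernel as $\pi^{-1}(A)$), so no complication arises. The final clause of the lemma, that $\scat \subseteq \cat$ is closed under extensions, is then a direct consequence of this construction; together with the obvious inclusion $\sfam \subseteq \scat$ this confirms that $\scat$ is the minimal full subcategory of $\cat$ containing $\sfam$ and closed under extensions.
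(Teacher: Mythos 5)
Your proof is correct and follows essentially the same route as the paper's: splice the dual filtrations along the short exact sequence by pulling back the quotient's filtration for the top steps and pushing the sub's filtration in for the bottom steps, check agreement at the junction, identify the subquotients, and pass back to the dual cofiltration. The only cosmetic difference is orientation — you keep the statement's convention $0 \to X'' \to X \to X' \to 0$ while the paper's own proof silently works with $0 \to X' \to X \to X'' \to 0$ — which does not affect the argument since the construction is symmetric in the two objects.
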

\begin{proof}
Let us assume that $(X',C')$ and $(X'',C'')$ are iterated extensions of the
family $\sfam$ of lengths $n',n''$. Since $X$ is an extension of $X''$ by
$X'$, we can construct a cofiltration of $X$ of length $n = n' + n''$ in the
following way: Let $f: X' \to X$ and $g: X \to X''$ be the maps given by the
extension $0 \to X' \to X \to X'' \to 0$, let $F'$ be the filtration of $X'$
dual to the cofiltration $C'$, and let $F''$ be the filtration of $X''$ dual
to the cofiltration $C''$. We define $F_i = g^{-1}(F''_i)$ for $0 \le i \le
n''$, and $F_i = f(F'_{i-n''})$ for $n'' \le i \le n$. Then $F$ is a
filtration of $X$, and $F_{i-1}/F_i \cong \ker(X \to C''_{i-1})/\ker(X \to
C''_i) \cong K''_i$ for $0 \le i \le n''$, and $F_{i-1}/F_i \cong K'_{i-n''}$
for $n'' \le i \le n$. Let $C$ be the cofiltration of $X$ dual to the
filtration $F$. Then it follows by construction that $(X,C)$ is an iterated
extension of the family $\sfam$ of length $n$.
\end{proof}

We recall that $\scat \subseteq \cat$ is called an \emph{exact Abelian
subcategory} if the inclusion functor $\scat \to \cat$ is an exact functor.
It is well-known that this is the case if and only if $\scat$ is closed in
$\cat$ under kernels, cokernels and finite direct sums. It is clear that
$\scat$ is closed under finite direct sums since it closed under extensions.
But in general, it is not closed under kernels and cokernels.

\begin{proposition} \label{p:abcat}
The full subcategory $\scat \subseteq \cat$ is an exact Abelian subcategory
if and only if the following conditions hold:
\begin{enumerate}
\item $\enm_{\cat}(S_{\alpha})$ is a division algebra for all $\alpha \in I$
\item $\mor_{\cat}(S_{\alpha},S_{\beta}) = 0$ for all $\alpha, \beta \in I$
with $\alpha \ne \beta$
\end{enumerate}
If this is the case, then $\sfam$ is the set of simple objects in $\scat$,
up to isomorphism.
\end{proposition}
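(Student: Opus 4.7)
The plan is to prove the biconditional in two directions, with the identification of $\sfam$ as the set of $\scat$-simples emerging from the forward direction. A repeatedly used observation is that any nonzero $X \in \scat$ contains some $S_\beta \in \sfam$ as a $\cat$-subobject: the bottom term $F_{n-1} \cong S_{\alpha(n)}$ of the filtration dual to any cofiltration of $X$ provides such an embedding.

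For the sufficiency direction, assume (i) and (ii). I would first show that each $S_\alpha$ is simple in $\scat$: given a proper nonzero $\scat$-subobject $T \subsetneq S_\alpha$, the observation produces a nonzero $\cat$-morphism $S_\beta \hookrightarrow T \hookrightarrow S_\alpha$, whence (ii) forces $\beta = \alpha$ and (i) upgrades the composite to an isomorphism, contradicting $T \ne S_\alpha$. Next I would show closure under kernels and cokernels for $f \colon X \to Y$ in $\scat$ by induction on $\ell(X) + \ell(Y)$, where $\ell$ denotes cofiltration length. The base case $\ell(X) = \ell(Y) = 1$ is Schur via (i) and (ii) applied to $f \colon S_\alpha \to S_\beta$. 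In the inductive step with $\ell(X) \ge 2$, I would use the short exact sequence $0 \to X' \to X \to S_\gamma \to 0$ arising from the top of the cofiltration of $X$, apply the induction hypothesis to the restriction $f|_{X'} \colon X' \to Y$ and to the induced morphism $\bar f \colon S_\gamma \to \coker(f|_{X'})$, and then invoke the snake lemma together with the preceding closure-under-extensions lemma to assemble $\ker f$ and $\coker f$ as $\scat$-objects; the case $\ell(X) = 1$, $\ell(Y) \ge 2$ is handled dually, using a short exact sequence arising from $Y$.

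For the necessity direction, assume $\scat$ is exact Abelian. My strategy is to show each $S_\alpha$ is simple in $\scat$ and then transfer Schur's lemma from $\scat$ to $\cat$ via fullness of the inclusion. First, I would show every $\scat$-simple $M$ lies in $\sfam$: the observation provides a nonzero inclusion $S_\beta \hookrightarrow M$, forced to be an isomorphism by $\scat$-simplicity. Jordan--H\"older inside the Abelian category $\scat$ then gives a well-defined additive length $\ell_{\scat}$ with composition factors drawn from $\sfam$. A proper nonzero $\scat$-subobject $T \subsetneq S_\alpha$ would yield $\ell_{\scat}(S_\alpha) \ge 2$ via additivity on $0 \to T \to S_\alpha \to S_\alpha/T \to 0$, whereas the trivial length-one cofiltration of $S_\alpha$ must refine to a composition series of length $\ell_{\scat}(S_\alpha)$ whose factors include $S_\alpha$ at the outermost step; tracking these refinement factors against those of $T$ and $S_\alpha/T$, together with pairwise non-isomorphism of $\sfam$, produces the desired contradiction. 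Once each $S_\alpha$ is $\scat$-simple, (i) and (ii) follow from Schur's lemma in $\scat$ combined with fullness of $\scat \hookrightarrow \cat$, and the final statement identifying $\sfam$ as the set of $\scat$-simples is exactly the first subclaim.

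The main obstacle, I anticipate, is the last step of the necessity direction: showing that each $S_\alpha$ is simple in $\scat$ without circularly invoking (i) or (ii). The argument must stay entirely inside the exact-Abelian $\scat$, relying only on Jordan--H\"older there, while exploiting the pairwise non-isomorphism of $\sfam$ to rule out alternative iterated-extension decompositions of $S_\alpha$ through proper $\sfam$-subobjects. The sufficiency direction's inductive closure argument, by contrast, is routine once simplicity of each $S_\alpha$ in $\scat$ is in hand.
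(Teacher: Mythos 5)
The paper disposes of this proposition by citing Theorem 1.2 of Ringel, so your from-scratch argument is necessarily a different route, and your sufficiency direction is essentially a correct reconstruction of the standard argument: the observation that every nonzero object of $\scat$ contains some $S_\beta$ as a subobject, the Schur-type base case, and the snake-lemma assembly along $0 \to X' \to X \to S_\gamma \to 0$ all work. You do gloss over one bookkeeping point: to apply the inductive hypothesis to $\bar f \colon S_\gamma \to \coker(f|_{X'})$ you need $\ell(\coker(f|_{X'})) \le \ell(Y)$, which is not free while $\ell$ is still only a cofiltration length; the standard fix is to strengthen the induction to prove simultaneously that $\ell(\ker f) \le \ell(X)$ and $\ell(\coker f) \le \ell(Y)$, which the snake lemma then propagates. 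That part is routine.

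The necessity direction, however, has a genuine and unfixable gap at exactly the step you flag as the main obstacle. Your refinement argument breaks at the assertion that the refined composition series of $S_\alpha$ has $S_\alpha$ among its factors \enquote{at the outermost step}: refining the length-one cofiltration replaces the factor $S_\alpha$ by its $\scat$-composition factors, and nothing forces $S_\alpha$ itself to reappear. Indeed the implication you are trying to prove is false from exactness alone: take $\cat$ to be the category of $k$-vector spaces and $\sfam = \{k, k^2\}$. Then $\scat$ is the category of all finite-dimensional vector spaces, which is an exact Abelian subcategory, yet $\enm_{\cat}(k^2) = M_2(k)$ is not a division algebra and $\hmm_{\cat}(k,k^2) \neq 0$; note also that here $\sfam$ is strictly larger than the set of simple objects of $\scat$. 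The necessity direction holds only under the additional hypothesis, present in Ringel's formulation, that $\sfam$ is the set of simple objects of $\scat$ (equivalently, that each $S_\alpha$ is $\scat$-simple), and with that hypothesis it is immediate from Schur's lemma and fullness of the inclusion, exactly as you say. So the correct repair is not a cleverer tracking of refinement factors but an adjustment of the hypothesis; as you have set it up, the \enquote{only if} half cannot be established.
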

\begin{proof}
This follows from Theorem 1.2 in Ringel \cite{ring76}, and the comments
preceding it.
\end{proof}

Let us use the notation from Ringel \cite{ring76}, and say that an object $X$
in $\cat$ is a \emph{point} if $\enm_{\cat}(X)$ is a division ring, and that
two points $X,Y$ in $\cat$ are \emph{orthogonal} if $\mor_{\cat}(X,Y) = 0$
and $\mor_{\cat}(Y,X) = 0$. Moreover, we shall write $k(X) = \enm_{\cat}(X)$
for the division algebra over $k$ associated with a point $X$, and say that
$X$ is a \emph{$k$-rational point} if $k(X) = k$.

\section{Length categories}

A \emph{length category} is an Abelian category such that any of its objects
has finite length, and such that the isomorphism classes of objects form a
set. We recall some well-known facts about length categories; see for
instance Gabriel \cite{gabr73}:
\begin{enumerate}
\item
The Jordan-H\"older Theorem: Any object $X$ in a length category has a
composition series; that is, it has a filtration
    \[ 0 = F_n \subseteq F_{n-1} \subseteq \dots \subseteq F_1 \subseteq F_0
    = X \]
such that $K_i = F_{i-1}/F_i$ is a simple object for $1 \le i \le n$. The
length $n$ and the simple factors $K_1, \dots, K_n$ in a composition series
are unique, up to a permutation of the simple factors.
\item
The Krull-Schmidt Theorem: Any object $X$ in a length category is a finite
direct sum
    \[ X = X_1 \oplus X_2 \oplus \dots \oplus X_r \]
of indecomposable objects. The indecomposable direct summands $X_1, \dots,
X_r$ are unique, up to a permutation.
\item
Mitchell's Embedding Theorem: A length category is exact equivalent to an
exact subcategory of $\mcat A$ for an associative ring $A$.
\end{enumerate}

Let $\sfam$ be a family of orthogonal points in an Abelian $k$-category
$\cat$. It follows from Proposition \ref{p:abcat} that $\scat$ is a length
category, with $\sfam$ as its simple objects. In fact, any length category
which is an Abelian $k$-category is of this type.

Our goal is to classify and explicitly construct the indecomposable objects
in the length category $\scat$. Even though this is a quite hopeless task in
general, we prove a classification result in the special case of uniserial
length categories in Section \ref{s:uniser} and \ref{s:ind}. Our philosophy
is to start with the family $\sfam$, and use iterated extensions in $\extn$
to build larger indecomposable modules.

The \emph{species} of the length category $\scat$ consists of the family
$\{ k(S_{\alpha}): \alpha \in I \}$ of division algebras of its simple
objects, and the family $\{ \ext^1_{\cat}(S_{\alpha},S_{\beta}): \alpha,
\beta \in I \}$ of $k(S_{\beta})$-$k(S_{\alpha})$ bimodules of extensions.
If $\sfam$ is a family of orthogonal $k$-rational points, the species of
$\scat$ can be represented by a quiver $\Lambda$, with $I$ as nodes, and
with $\dim_k \ext^1_{\cat}(S_{\alpha},S_{\beta})$ arrows from node $\alpha$
to node $\beta$ for all $\alpha, \beta \in I$. The quiver $\Lambda$ (and more
generally, the species) of the length category $\scat$ contains a lot of
information about $\scat$ and its indecomposable objects.

In fact, we shall show in Section \ref{s:defm} that the iterated extensions
in $\extn$ are completely determined by noncommutative deformations of its
simple factors. In the unobstructed case, these deformations are determined
by the species of $\scat$.

\section{Uniserial length categories} \label{s:uniser}

Let $\sfam$ be a family of orthogonal $k$-rational points in an Abelian
$k$-category $\cat$, and let $\scat$ be the corresponding length category.
We denote by $\Lambda$ the quiver of the species of $\scat$.

We say that an object $X$ in $\scat$ is \emph{uniserial} if its lattice of
subobjects is a chain. If this is the case, then this chain is the unique
decomposition series of $X$. It follows that $X$ is uniserial if and only
if any two cofiltrations of $X$ are isomorphic. Any uniserial object in
$\scat$ is indecomposable, but the opposite implication does not hold in
general. We say that $\scat$ is a \emph{uniserial category} if every
indecomposable object in $\scat$ is uniserial.

\begin{lemma} \label{l:induni}
Let $X$ be an object in $\scat$, and consider the following conditions:
\begin{enumerate}
\item $X$ is uniserial
\item $X$ has a unique minimal subobject $S \subseteq X$
\item $X$ is indecomposable
\end{enumerate}
Then we have $(1) \Rightarrow (2) \Rightarrow (3)$. In particular, all
conditions are equivalent if and only if $\scat$ is a uniserial category.
\end{lemma}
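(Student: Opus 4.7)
The plan is to prove the two implications in sequence and then deduce the final equivalence characterization. The proof is essentially bookkeeping; the only substantive ingredient needed is that $\scat$, being a length category, has the property that every non-zero object contains a simple subobject. I do not foresee a hard step, only the need to apply this property cleanly in the right places.

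For $(1) \Rightarrow (2)$, I would observe that if the lattice of subobjects of $X$ is a chain, then the collection of non-zero subobjects is totally ordered, and since $X$ has finite length this collection has a least element $S$. Any proper subobject of $S$ would sit strictly below $S$ in the chain, so $S$ is simple. Being the least non-zero subobject, $S$ is also the unique minimal subobject of $X$.

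For $(2) \Rightarrow (3)$, the cleanest route is the contrapositive. Assume $X = Y \oplus Z$ is a non-trivial direct sum. Both summands are non-zero objects of the length category $\scat$, so each contains a simple subobject $S_Y \subseteq Y$ and $S_Z \subseteq Z$. These are two distinct simple subobjects of $X$ (they intersect trivially since the sum $Y \oplus Z$ is direct), and neither contains the other, so $X$ has two distinct minimal subobjects and fails (2).

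For the ``in particular'' statement, I would note that $(1) \Rightarrow (3)$ already follows from the two implications just proved, so the missing direction is $(3) \Rightarrow (1)$. But this is exactly the definition of $\scat$ being a uniserial category: every indecomposable object is uniserial. Hence the three conditions are equivalent for every $X \in \scat$ if and only if $\scat$ is uniserial, completing the lemma.
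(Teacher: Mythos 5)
Your proof is correct and follows essentially the same route as the paper: the paper also dismisses $(1)\Rightarrow(2)$ as obvious, proves $(2)\Rightarrow(3)$ by exhibiting a minimal subobject in each summand of a nontrivial decomposition, and notes that the final equivalence is just the definition of a uniserial category. Your write-up merely spells out the finite-length argument behind the first implication, which the paper leaves implicit.
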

\begin{proof}
The implication $(1) \Rightarrow (2)$ is obvious. To prove $(2) \Rightarrow
(3)$, let $X = Y_1 \oplus Y_2$ be a direct decomposition of $X$ with $Y_1,
Y_2 \ne 0$. Then there are minimal subobjects $S_i \subseteq Y_i$ in $\scat$
for $i = 1,2$ and this contradicts $(2)$. The last part follows directly
from the definition.
\end{proof}

The implication $(3) \Rightarrow (1)$ in Lemma \ref{l:induni} clearly holds
if $X$ has length $n=2$, since an indecomposable object of length $2$ is a
non-split extension of two objects in $\sfam$. But already for $n=3$, it is
easy to find examples where this implication fails:

\begin{lemma} \label{l:ce1}
If $\sfam$ contains orthogonal $k$-rational points $S,T$ with $\dim_k
\ext^1_{\cat}(S,T) \ge 2$, then there exists an indecomposable, non-uniserial
object in $\scat$ of length $n=3$.
\end{lemma}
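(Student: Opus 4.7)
The plan is to construct $X$ as a length-three object whose socle is $T$ and whose top is $S \oplus S$, then verify both non-uniseriality and indecomposability. Concretely, I would choose two classes $\xi_1, \xi_2 \in \ext^1_{\cat}(S,T)$ that are linearly independent over $k$, and use the canonical identification $\ext^1_{\cat}(S \oplus S, T) \cong \ext^1_{\cat}(S,T) \oplus \ext^1_{\cat}(S,T)$ (valid since $S$ is $k$-rational) to produce an extension
\[ 0 \to T \to X \to S \oplus S \to 0 \]
associated with $(\xi_1, \xi_2)$. This $X$ is assembled by iterated extensions from elements of $\sfam$, so by the previous lemma it lies in $\scat$ and has length $3$ with simple factors $T, S, S$.

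For non-uniseriality, the quotient $X/T \cong S \oplus S$ contains the two distinct minimal subobjects $S \oplus 0$ and $0 \oplus S$, so its subobject lattice is not a chain. Since a quotient of a uniserial object by a member of its composition chain is again uniserial, $X$ cannot be uniserial.

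For indecomposability, suppose for contradiction that $X = Y_1 \oplus Y_2$ with both summands non-zero. Because $X$ has length $3$, one summand, say $Y_1$, is simple, hence isomorphic to $S$ or to $T$ by Proposition~\ref{p:abcat}. The possibility $Y_1 \cong T$ forces $Y_1 \subseteq T$ (as $\mor_{\cat}(T,S) = 0$), so $Y_1 = T$, and then the sequence splits, contradicting $\xi_1 \neq 0$. In the remaining case $Y_1 \cong S$, orthogonality $\mor_{\cat}(S,T)=0$ ensures $Y_1 \cap T = 0$, so the projection $X \to X/T = S \oplus S$ sends $Y_1$ isomorphically onto a one-dimensional subobject $\ell_1 \subseteq S \oplus S$ determined by some nonzero vector $(\lambda, \mu) \in k^2$. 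The pullback of the extension along the inclusion $\ell_1 \hookrightarrow S \oplus S$ must split, because $Y_1 \to X$ provides a section over $\ell_1$. But this pullback is precisely the class $\lambda \xi_1 + \mu \xi_2 \in \ext^1_{\cat}(S,T)$, which is non-zero by linear independence of $\xi_1, \xi_2$, giving the desired contradiction.

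The main obstacle I expect is the indecomposability step: it requires carefully identifying the pullback of $(\xi_1, \xi_2)$ along each linear embedding $S \hookrightarrow S \oplus S$ with the corresponding linear combination of $\xi_1$ and $\xi_2$, and combining this with the orthogonality of the points $S$ and $T$ to rule out every possible simple direct summand. The non-uniseriality and the verification that $X$ lies in $\scat$ are essentially immediate once the construction is set up.
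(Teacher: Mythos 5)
Your proof is correct, and the object you build is the same one the paper builds: the paper realizes the extension of $S \oplus S$ by $T$ classified by two linearly independent elements of $\ext^1_{\cat}(S,T)$ concretely as $X = \coker(f)$, where $f: T \to U \oplus V$ is the diagonal map into the direct sum of two non-isomorphic non-split extensions $U,V$ of $S$ by $T$. Where you genuinely diverge is in the verification. The paper shows that $T$ is the \emph{unique} minimal subobject of $X$ (a second minimal subobject $T'$ would give $U \oplus T' = X = V \oplus T'$ and hence force $U \cong V$), then gets indecomposability from the implication $(2) \Rightarrow (3)$ of Lemma \ref{l:induni}, and gets non-uniseriality from the two composition series $0 \subseteq T \subseteq U \subseteq X$ and $0 \subseteq T \subseteq V \subseteq X$. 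You instead refute a decomposition $X = Y_1 \oplus Y_2$ head-on, by classifying the possible simple summands and computing the pullback of $(\xi_1,\xi_2)$ along an embedding $(\lambda,\mu): S \to S \oplus S$ as $\lambda \xi_1 + \mu \xi_2 \neq 0$, and you get non-uniseriality by passing to the quotient $X/T \cong S \oplus S$. Both routes are sound; the paper's socle argument is the one reused later (cf.\ Proposition \ref{p:ind}), while your pullback computation is self-contained and makes explicit exactly where $k$-rationality enters, namely that the simple subobjects of $S \oplus S$ are parameterized by pairs $(\lambda,\mu) \in k^2 \setminus \{0\}$ because $\enm_{\cat}(S) = k$, so that linear independence of $\xi_1,\xi_2$ over $k$ suffices. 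One cosmetic remark: the identification $\ext^1_{\cat}(S \oplus S, T) \cong \ext^1_{\cat}(S,T) \oplus \ext^1_{\cat}(S,T)$ is just additivity of $\ext^1$ in its first argument and needs no rationality hypothesis; rationality is only needed in the pullback step.
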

\begin{proof}
Notice that there exist non-split extensions $U,V$ of $S$ by $T$ such that
$U$ and $V$ are not isomorphic in $\scat$. In fact, if $U,V$ are non-split
extensions of $S$ by $T$, then any isomorphism $u: U \to V$ satisfies $u(T)
\subseteq T$ since $T$ is the unique minimal subobject of $U,V$ in $\scat$.
Therefore, $u$ induces automorphisms on $T$, and on $S$, which are given by
multiplication in $k^*$ since $S,T$ are $k$-rational points. This means that
not all non-split extensions are isomorphic in $\scat$; otherwise, we would
have that $\dim_k \ext^1_{\cat}(S,T) \le 1$. We define $X = \coker(f)$,
where $f: T \to U \oplus V$ is the diagonal map, and consider the short exact
sequence
    \[ 0 \to T \to U \oplus V \to X \to 0 \]
We see that $X$ has length $n = 3$, that $U,V \subseteq X$ are subobjects in
$\scat$ of length $n = 2$, and that $T$ is the unique minimal subobject of
$X$ in $\scat$. In fact, if $T'$ is another minimal subobject of $X$, then
$T'$ is not contained in $U,V$ since they are uniserial. Hence $U \oplus T'
= X = V \oplus T'$, and this implies that $U$ is isomorphic to $V$, which is
a contradiction. Since $X$ has a unique minimal subobject in $\scat$, it is
indecomposable, and it is non-uniserial since
    \[ 0 \subseteq T \subseteq U \subseteq X \qquad \text{and} \qquad
    0 \subseteq T \subseteq V \subseteq X \]
are different composition series of $X$.
\end{proof}

\begin{lemma} \label{l:ce2}
If $\sfam$ contains points $S,T,U$ such that $\ext^1_{\cat}(U,S),
\ext^1_{\cat}(U,T) \ne 0$ and $S,T$ are orthogonal, then there exists an
indecomposable object in $\scat$ of length $n=3$ with $S,T$ as minimal
subobjects.
\end{lemma}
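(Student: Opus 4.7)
The plan is to construct the desired object $X$ as a pullback. Let $\xi \in \ext^1_{\cat}(U,S)$ and $\eta \in \ext^1_{\cat}(U,T)$ be non-zero classes, represented by non-split extensions $0 \to S \to A \to U \to 0$ and $0 \to T \to B \to U \to 0$, and set $X = A \times_U B$. Then $X$ fits in a short exact sequence
\[ 0 \to S \oplus T \to X \to U \to 0, \]
whose class in $\ext^1_{\cat}(U, S \oplus T) \cong \ext^1_{\cat}(U,S) \oplus \ext^1_{\cat}(U,T)$ is the pair $(\xi, \eta)$. Thus $X$ is built by extensions from $S, T, U \in \sfam$ and hence lies in $\scat$ with length $3$, containing $S$ and $T$ as length-one subobjects.

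Next I would verify that $S$ and $T$ are the only minimal subobjects of $X$. Any simple subobject $Y \subseteq X$ is isomorphic to some $S_{\alpha} \in \sfam$. If the composition $Y \hookrightarrow X \twoheadrightarrow U$ is non-zero, orthogonality of $\sfam$ forces $Y \cong U$ and makes this composition an isomorphism, splitting the defining sequence of $X$; since $(\xi, \eta) \neq 0$, this is impossible. Hence $Y \subseteq \ker(X \to U) = S \oplus T$, and orthogonality of $S, T$ then forces $Y = S$ or $Y = T$.

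Finally, I would prove indecomposability using the quotients $X/S$ and $X/T$. Pushing the defining sequence of $X$ forward along the projections $S \oplus T \to S$ and $S \oplus T \to T$ yields extensions $0 \to S \to X/T \to U \to 0$ and $0 \to T \to X/S \to U \to 0$ representing $\xi$ and $\eta$ respectively; both are non-split, and hence indecomposable of length $2$. Suppose now $X = X_1 \oplus X_2$ with both summands non-zero. Since $\soc(X) = S \oplus T$ by the previous step, each $X_i$ has simple socle, say $\soc(X_1) = S$ and $\soc(X_2) = T$. Their lengths sum to $3$, so WLOG $X_1$ has length $1$ and thus $X_1 = S$; then $X/T = X_1 \oplus (X_2/T) \cong S \oplus U$ would be decomposable, contradicting the non-splitness of $\xi$. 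The main obstacle is this last indecomposability argument, which requires combining the non-splitness of \emph{both} $\xi$ and $\eta$ with the orthogonality of $\sfam$; the pullback construction and the socle computation are otherwise routine.
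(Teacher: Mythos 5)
Your proof is correct and takes essentially the same route as the paper: the paper also forms the pullback $X \subseteq E_1 \oplus E_2$ of $g_1$ and $g_2$, identifies the resulting sequence $0 \to S \oplus T \to X \to U \to 0$ with the direct sum extension $(\xi_1,\xi_2) \in \ext^1_{\cat}(U, S \oplus T)$, and argues that a nontrivial decomposition would force $S$ or $T$ to be a direct summand, which is impossible since $\xi_1, \xi_2 \neq 0$. Your socle computation and the passage to the quotients $X/S$ and $X/T$ merely spell out the details behind the paper's ``clearly indecomposable'' step.
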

\begin{proof}
Let $\xi_1, \xi_2 \ne 0$ be non-split extensions of $U$ by $S$, and of $U$ by
$T$, given by short exact sequences
    \[ 0 \to S \to E_1 \xrightarrow{g_1} U \to 0 \qquad \text{and} \qquad
    0 \to T \to E_2 \xrightarrow{g_2} U \to 0 \]
Define $X \subseteq E_1 \oplus E_2$ to be the pullback of $g_1$ and $g_2$.
Then the short exact sequence
    \[ 0 \to S \oplus T \to X \to U \to 0 \]
represents the direct sum extension $(\xi_1,\xi_2) \in \ext^1_{\cat}(U, S
\oplus T)$. It is clear that $S,T$ are minimal subobjects of $X$. Moreover,
$X$ is clearly indecomposable; otherwise, it would have $S$ or $T$ as a
direct summand, and this is not possible since $\xi_1, \xi_2 \neq 0$.
\end{proof}

\begin{lemma} \label{l:ce3}
If $\sfam$ contains points $S,T,U$ such that $\ext^1_{\cat}(S,U),
\ext^1_{\cat}(T,U) \ne 0$ and $S,T$ are orthogonal, then there exists an
indecomposable, non-uniserial object in $\scat$ of length $n=3$.
\end{lemma}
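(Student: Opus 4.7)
The plan is to dualize the construction in Lemma~\ref{l:ce2}. Where that argument used a pullback along surjections to $U$ from $E_1$ and $E_2$, here I will use a pushout along injections of $U$ into $E_1$ and $E_2$. Concretely, pick non-zero extensions $\xi_1 \in \ext^1_{\cat}(S,U)$ and $\xi_2 \in \ext^1_{\cat}(T,U)$, represented by short exact sequences
\[
0 \to U \xrightarrow{i_1} E_1 \to S \to 0, \qquad 0 \to U \xrightarrow{i_2} E_2 \to T \to 0,
\]
and form the pushout $X = (E_1 \oplus E_2)/\Delta(U)$, where $\Delta(u) = (i_1(u), -i_2(u))$. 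By the universal property, this produces a short exact sequence
\[
0 \to U \to X \to S \oplus T \to 0
\]
representing the direct sum extension $(\xi_1,\xi_2) \in \ext^1_{\cat}(S \oplus T, U) = \ext^1_{\cat}(S,U) \oplus \ext^1_{\cat}(T,U)$. Clearly $X$ has length $3$.

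The maps $E_1 \to E_1 \oplus E_2 \to X$ and $E_2 \to E_1 \oplus E_2 \to X$ are injective (one checks that their kernels are forced to be trivial by the injectivity of $i_1,i_2$), and their images are the preimages of $S$ and $T$ under $X \to S \oplus T$, respectively. Thus $E_1, E_2 \subseteq X$ are length-$2$ subobjects both containing $U$, and the two chains
\[
0 \subseteq U \subseteq E_1 \subseteq X \qquad\text{and}\qquad 0 \subseteq U \subseteq E_2 \subseteq X
\]
are distinct composition series of $X$, so $X$ is non-uniserial.

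To show $X$ is indecomposable, I will verify that $U$ is the unique minimal subobject and apply Lemma~\ref{l:induni}. Since the composition factors of $X$ lie in $\{S,T,U\}$, any simple subobject $V \subseteq X$ is isomorphic to one of these. If $V \cong S$, then $V \cap U = 0$ since $\mor_{\cat}(S,U) = 0$ or because $V$ is simple and $U \not\cong S$; hence $V$ injects into $X/U \cong S \oplus T$. Using $\mor_{\cat}(S,T) = 0$ (orthogonality), the composition $V \to S \oplus T \twoheadrightarrow T$ vanishes, so $V$ lands in the summand $S$ and the inclusion $V \hookrightarrow X$ therefore lands in the subobject $E_1$ (the preimage of $S$). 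This exhibits a splitting of $0 \to U \to E_1 \to S \to 0$, contradicting $\xi_1 \neq 0$. The case $V \cong T$ is symmetric, so $U$ is the only minimal subobject of $X$, and indecomposability follows.

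The main obstacle is the last step: ruling out simple subobjects of $X$ isomorphic to $S$ or $T$. This is where the hypotheses that $S,T$ are orthogonal and that the extensions are non-split are both essential, exactly as in Lemmas~\ref{l:ce1} and~\ref{l:ce2}; everything else is a routine pushout computation.
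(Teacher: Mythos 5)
Your proof is correct and follows essentially the same route as the paper: form the pushout $X$ of the two injections of $U$, observe that the images of $E_1$ and $E_2$ give two distinct composition series, and conclude indecomposability from the uniqueness of the minimal subobject $U$. The only difference is that you spell out the verification that $U$ is the unique minimal subobject (which the paper asserts directly from $\xi_1,\xi_2\neq 0$), and your argument there is sound.
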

\begin{proof}
Let $\xi_1, \xi_2 \ne 0$ be non-split extensions of $S$ by $U$, and of $T$ by
$U$, given by short exact sequences
    \[ 0 \to U \xrightarrow{f_1} E_1 \to S \to 0 \qquad \text{and} \qquad
    0 \to U \xrightarrow{f_2} E_2 \to T \to 0 \]
Define $X$ be the push-out of $f_1$ and $f_2$. Then the induced short exact
sequence
    \[ 0 \to U \to X \to S \oplus T \to 0 \]
represents the direct sum extension $(\xi_1,\xi_2) \in \ext^1_{\cat}(S \oplus
T,U)$. Clearly, there are natural injections $E_1 \to X$ and $E_2 \to
X$, which gives the composition series
    \[ 0 \subseteq U \subseteq E_1 \subseteq X \qquad \text{and} \qquad
    0 \subseteq U \subseteq E_2 \subseteq X \]
of $X$. Hence, $X$ is not uniserial. Moreover, $U$ is the unique minimal
subobject of $X$ since $\xi_1, \xi_2 \neq 0$, and it follows that $X$ is
indecomposable.
\end{proof}

\begin{proposition} \label{p:uniser-crit}
Let $\sfam = \{ S_{\alpha}: \alpha \in I \}$ be a family of orthogonal
$k$-rational points in an Abelian $k$-category $\cat$, and let $\Lambda$ be
the quiver of the species of the length category $\scat$. If $\scat$ is
uniserial, then following conditions hold:
\begin{enumerate}
\item For any $\alpha \in I$, there is at most one arrow in $\Lambda$ with
source $\alpha$.
\item For any $\beta \in I$, there is at most one arrow in $\Lambda$ with
target $\beta$.
\end{enumerate}
\end{proposition}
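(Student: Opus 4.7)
The plan is to prove the contrapositive of each assertion using the three counterexample lemmas just proved. I would argue both (1) and (2) are symmetric in spirit, so I would handle (1) in detail and only indicate the obvious dual argument for (2).

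Suppose condition (1) fails, i.e.\ there exists $\alpha \in I$ with at least two arrows in $\Lambda$ having source $\alpha$. By the definition of the quiver of the species, this splits into two cases: either
\begin{enumerate}
\item[(a)] there exists $\beta \in I$ with $\dim_k \ext^1_{\cat}(S_\alpha,S_\beta) \ge 2$, or
\item[(b)] there exist distinct $\beta_1, \beta_2 \in I$ with $\ext^1_{\cat}(S_\alpha,S_{\beta_1}) \ne 0$ and $\ext^1_{\cat}(S_\alpha,S_{\beta_2}) \ne 0$.
\end{enumerate}
In case (a), Lemma \ref{l:ce1} applied to $S = S_\alpha$, $T = S_\beta$ produces an indecomposable, non-uniserial object of length three in $\scat$, which contradicts the uniseriality of $\scat$. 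In case (b), Lemma \ref{l:ce2} applied to $U = S_\alpha$, $S = S_{\beta_1}$, $T = S_{\beta_2}$ produces an indecomposable object $X \in \scat$ of length three having both $S_{\beta_1}$ and $S_{\beta_2}$ as minimal subobjects. Since $S_{\beta_1} \not\cong S_{\beta_2}$, the object $X$ does not have a unique minimal subobject, so by the implication $(1) \Rightarrow (2)$ of Lemma \ref{l:induni} it cannot be uniserial; this again contradicts the uniseriality of $\scat$.

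The argument for condition (2) is entirely dual. If condition (2) fails, then either there exist $\alpha, \beta \in I$ with $\dim_k \ext^1_{\cat}(S_\alpha,S_\beta) \ge 2$, in which case Lemma \ref{l:ce1} gives the same contradiction, or there exist distinct $\alpha_1, \alpha_2 \in I$ with $\ext^1_{\cat}(S_{\alpha_1},S_\beta) \ne 0$ and $\ext^1_{\cat}(S_{\alpha_2},S_\beta) \ne 0$. In the latter case, Lemma \ref{l:ce3} applied to $S = S_{\alpha_1}$, $T = S_{\alpha_2}$, $U = S_\beta$ directly produces an indecomposable, non-uniserial object of length three in $\scat$, contradicting uniseriality.

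The only real work has already been carried out in Lemmas \ref{l:ce1}, \ref{l:ce2}, \ref{l:ce3} and \ref{l:induni}; the present proposition is essentially a bookkeeping step that packages the three obstructions into the single criterion on $\Lambda$. The main thing to be careful about is the case (b) of condition (1), where Lemma \ref{l:ce2} does not explicitly assert non-uniseriality, and one has to invoke Lemma \ref{l:induni} to rule uniseriality out from the presence of two distinct minimal subobjects.
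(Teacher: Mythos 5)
Your proof is correct and follows essentially the same route as the paper, which simply cites Lemmas \ref{l:ce1}, \ref{l:ce2} and \ref{l:ce3} against the three forbidden subquivers of $\Lambda$. Your extra care in case (b) of condition (1) --- noting that Lemma \ref{l:ce2} only yields two distinct minimal subobjects and that Lemma \ref{l:induni} must then be invoked to conclude non-uniseriality --- is exactly the intended (but unstated) step in the paper's argument.
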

\begin{proof}
It follows from Lemma \ref{l:ce1}, Lemma \ref{l:ce2} and Lemma \ref{l:ce3}
that the length category $\scat$ is not uniserial if the quiver $\Lambda$
contains a subquiver of one of the forms
\UseComputerModernTips
\begin{equation*}
\xymatrix{\alpha \ar@<-.5ex>[r] \ar@<.5ex>[r] & \beta }
\qquad \qquad
\xymatrix{\beta & \alpha \ar[l] \ar[r] & \beta'}
\qquad \qquad
\xymatrix{\alpha \ar[r] & \beta & \alpha' \ar[l]}
\end{equation*}
with $\beta \neq \beta'$ in the middle quiver and $\alpha \neq \alpha'$ in
the right quiver.
\end{proof}

The two conditions in Proposition \ref{p:uniser-crit} are equivalent to the
following condition, which we call the \emph{uniseriality criterion} and
refer to as (\uc):
\begin{align*} \tag{\uc}
\sum_{\beta \in I} \, \dim_k \ext^1_{\cat}(S_{\alpha},S_{\beta}) \le 1 & \quad
\text{for all $\alpha \in I$} \\
\sum_{\alpha \in I} \, \dim_k \ext^1_{\cat}(S_{\alpha},S_{\beta}) \le 1 & \quad
\text{for all $\beta \in I$}
\end{align*}
We claim that the length category $\scat$ is uniserial if and only if the
condition (\uc) holds. It turns out that this was known already in the
60's; see Section 8.3 in Gabriel \cite{gabr73}. As far as we know, this
result first appeared in Amdal, Ringdal \cite{amda-ring68}, where it is
stated without proof.

We shall give an elementary and constructive proof of this characterization
in the next section. The proof is constructive in the sense that we classify
and explicitly construct all indecomposable objects in $\scat$ when (\uc)
holds, and show that these indecomposable objects are uniserial.

\section{Construction of indecomposable objects} \label{s:ind}

Let $\sfam$ be a family of orthogonal $k$-rational points in an Abelian
$k$-category $\cat$, and let $\scat$ be the corresponding length category. We
denote by $\Lambda$ the quiver of the species of $\scat$, and assume that the
condition (\uc) holds. In this situation, we shall classify and explicitly
construct all indecomposable objects in $\scat$.

We consider the full subcategory $\extncl{n,*} \subseteq \extn$ of iterated
extensions $(X,C)$ of length $n$ such that $\xi_2, \dots, \xi_n \neq 0$. Any
indecomposable object $X$ in $\scat$ of length $n$ has a cofiltration $C$ such
that $(X,C)$ is an iterated extension in $\extn$ with $\xi_n \neq 0$. The
idea is that many indecomposable objects, though not necessarily all, have a
cofiltration $C$ such that $(X,C)$ is in $\extncl{n,*}$, and we start by
classifying these indecomposable objects.

\begin{lemma} \label{l:indextnstr}
Let $(X,C)$ be an iterated extension in $\extncl{n,*}$. If $\sfam$ satisfies
(\uc), then the $k$-linear map
    \[ \ext^1_{\cat}(C_{i-1},K) \to \ext^1_{\cat}(K_{i-1},K) \]
induced by the inclusion $K_{i-1} \subseteq C_{i-1}$ is an isomorphism for
all integers $i$ such that $2 \le i \le n$ and for all simple objects $K \in
\sfam$. In particular, $\tau_i \neq 0$ for $2 \le i \le n$.
\end{lemma}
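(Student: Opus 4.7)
The plan is to prove Lemma \ref{l:indextnstr} by induction on $i$. The key tool is the long exact sequence obtained from the short exact sequence
\[ 0 \to K_{i-1} \to C_{i-1} \to C_{i-2} \to 0 \]
by applying $\hmm_{\cat}(-,K)$:
\[ \hmm_{\cat}(K_{i-1},K) \xrightarrow{\delta} \ext^1_{\cat}(C_{i-2},K) \to \ext^1_{\cat}(C_{i-1},K) \xrightarrow{r} \ext^1_{\cat}(K_{i-1},K) \to \ext^2_{\cat}(C_{i-2},K), \]
where $\delta$ is pushout along $\xi_{i-1}$, so in particular $\delta(\id_{K_{i-1}}) = \xi_{i-1}$. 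The base case $i=2$ is immediate since $C_1 = K_1 = K_{i-1}$. For the inductive step ($i \ge 3$) I assume the statement at $i-1$, so $\ext^1_{\cat}(C_{i-2},K) \cong \ext^1_{\cat}(K_{i-2},K)$ for every simple $K$; since $\xi_{i-1} \neq 0$ corresponds to $\tau_{i-1} \neq 0$, the uniqueness in (\uc) forces $K_{i-1}$ to be the unique target of an arrow out of $K_{i-2}$. Consequently $\ext^1_{\cat}(C_{i-2},K) = 0$ for $K \neq K_{i-1}$, while for $K = K_{i-1}$ it is one-dimensional and generated by $\xi_{i-1}$.

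Injectivity of $r$ is then equivalent to surjectivity of $\delta$, which holds trivially when $K \neq K_{i-1}$ and, for $K = K_{i-1}$, because $\delta(\id) = \xi_{i-1}$ is a generator. This immediately yields the \enquote{in particular} clause, since $\tau_i = r(\xi_i)$ and $\xi_i \neq 0$ by the hypothesis $(X,C) \in \extncl{n,*}$.

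For surjectivity of $r$ I split on whether $\ext^1_{\cat}(K_{i-1},K)$ vanishes. If it does, injectivity forces $\ext^1_{\cat}(C_{i-1},K) = 0$ and there is nothing to prove. Otherwise, by (\uc), such a simple $K$ is unique; invoking $i \le n$ and $\xi_i \neq 0$, the just-established injectivity of $r$ gives $\tau_i \neq 0$, and uniqueness then forces $K = K_i$. Thus $\xi_i$ is a non-zero element of $\ext^1_{\cat}(C_{i-1},K)$, and together with the inclusion $r \colon \ext^1_{\cat}(C_{i-1},K) \hookrightarrow \ext^1_{\cat}(K_{i-1},K) \cong k$ this forces both sides to be one-dimensional and $r$ to be an isomorphism.

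The step I expect to be the main obstacle is precisely this surjectivity, because a naive long exact sequence argument would reduce it to the vanishing of the Yoneda obstruction $\xi_{i-1} \cup \eta \in \ext^2_{\cat}(C_{i-2},K^{*})$, which need not vanish for abstract reasons inside $\cat$. The observation that circumvents this difficulty is that the hypothesis $(X,C) \in \extncl{n,*}$ already provides the next cofiltration step $\xi_i \neq 0$ as a ready-made lift, so no analysis of $\ext^2_{\cat}$ is needed at all.
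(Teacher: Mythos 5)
Your proof is correct and follows essentially the same route as the paper's: induction (the paper inducts on the length $n$ via the truncated cofiltration, you induct on $i$, which amounts to the same thing), the long exact sequence of $\hmm_{\cat}(-,K)$ applied to $0 \to K_{i-1} \to C_{i-1} \to C_{i-2} \to 0$, surjectivity of the connecting map $\delta$ (forced by the induction hypothesis plus (\uc), with $\delta(\id)=\xi_{i-1}$ a generator) to get injectivity of $r$, and then $\xi_i \neq 0$ together with (\uc) to get surjectivity. Your closing observation that $\xi_i$ serves as the ready-made lift, sidestepping any $\ext^2$ analysis, is exactly how the paper's proof concludes as well.
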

\begin{proof}
We show the result by induction on $n$. Since $C_1=K_1$ by definition, the
result is clearly true for $n=2$. So let $n \ge 3$, and assume that the
result holds for all integers less than $n$ and all simple objects $K \in
\sfam$. In particular, this implies that
    \[ \ext^1_{\cat}(C_{n-2},K) \to \ext^1_{\cat}(K_{n-2},K) \]
is an isomorphism. Since $\xi_{n-1} \mapsto \tau_{n-1}$ under this map when
$K = K_{n-1}$, it follows that $\tau_{n-1} \neq 0$, and in particular, that
$\ext^1_{\cat}(K_{n-2},K_{n-1}) = k \cdot \tau_{n-1} \cong k$ by (\uc).
Hence we also have $\ext^1_{\cat}(C_{n-2},K) = k \cdot \xi_{n-1} \cong k$.
Let us consider the long exact sequence of the functor $\hmm_{\cat}(-,K)$
applied to the extension $\xi_{n-1}$, given by
\begin{align*}
\dots \to \hmm_{\cat}(C_{n-1},K) &\to \hmm_{\cat}(K_{n-1},K) \to \\
\ext^1_{\cat}(C_{n-2},K) \to \ext^1_{\cat}(C_{n-1},K) &\to
\ext^1_{\cat}(K_{n-1},K) \to \dots
\end{align*}
For all simple objets $K \in \sfam$, we claim that $\hmm_{\cat}(K_{n-1},K)
\cong \ext^1_{\cat}(K_{n-2},K)$ and that $\hmm_{\cat}(K_{n-1},K) \to
\ext^1_{\cat}(C_{n-2},K)$ is an isomorphism: If $K = K_{n-1}$, we have that
$\enm_{\cat}(K_{n-1}) \cong k$ and that $\ext^1_{\cat}(K_{n-2},K_{n-1}) = k
\cdot \tau_{n-1} \cong k$. This proves the claim, since
$\ext^1_{\cat}(C_{n-2},K_{n-1}) = k \cdot \xi_{n-1} \cong k$ by the comments
above and the identity on $K_{n-1}$ maps to $\xi_{n-1}$ by construction. If
$K$ is not isomorphic to $K_{n-1}$, we have that $\hmm_{\cat}(K_{n-1},K) = 0$
and that $\ext^1_{\cat}(K_{n-2},K) = 0$ by orthogonality and (\uc). This
proves the claim, since $\ext^1_{\cat}(C_{n-2},K) \cong
\ext^1_{\cat}(K_{n-2},K) = 0$ by the induction hypothesis. We conclude that
for all simple objects $K \in \sfam$, the $k$-linear map
    \[ \ext^1_{\cat}(C_{n-1},K) \to \ext^1_{\cat}(K_{n-1},K) \]
is injective, and it is enough to show that it is an isomorphism to conclude
the proof. If $K = K_n$, then $\xi_n \mapsto \tau_n$ under this map, and by
injectivity, it follows that $\tau_n \neq 0$ since $\xi_n \neq 0$. Therefore,
$\ext^1_{\cat}(K_{n-1},K_n) = k \cdot \tau_n \cong k$ by (\uc), and the map
is an isomorphism. If $K$ is not isomorphic to $K_n$, then it follows from
(\uc) that $\ext^1_{\cat}(K_{n-1},K) = 0$, and the map is an isomorphism also
in this case.
\end{proof}

We consider the map $v_n: \extncl{n,*} \to I^n$, which maps an iterated
extension $(X,C)$ to its order vector $\vect \alpha$, and say that a vector
$\vect \alpha \in I^n$ is \emph{admissible} if $\vect \alpha \in \im(v_n)$.
For any iterated extension $(X,C)$ in $\extncl{n,*}$, it follows from Lemma
\ref{l:indextnstr} that $\tau_i \neq 0$ for $2 \le i \le n$, and that
    \[ \ext^1_{\cat}(K_{i-1},K_i) = k \cdot \tau_i \cong k \quad \text{and} \quad
    \ext^1_{\cat}(C_{i-1},K_i) = k \cdot \xi_i \cong k \]
This means that if $\vect \alpha$ is admissible, and $(X,C), (X',C')$ are
two iterated extensions in $\extncl{n,c}$ with order vector $\vect \alpha$,
then $X \cong X'$ in $\scat$. In fact, we have that $\xi'_i = c_i \xi_i$ for
$2 \le i \le n$ with $c_i \in k^*$, and it is well-known that the extensions
of $C_{i-1}$ by $K_i$ in the same $k^*$-orbit of $\ext^1_{\cat}(C_{i-1},K_i)$
are isomorphic in $\scat$.

Let $\vect \alpha \in I^n$ be an admissible vector. Then there is an iterated
extension $(X,C)$ in $\extncl{n,*}$ with order vector $\vect \alpha$, and it
follows from the comments above that $X$ is unique, up to isomorphism in
$\scat$. We shall write $X(\vect \alpha)$ for this object in $\scat$ when
$\vect \alpha$ is admissible. It turns out that $X(\vect \alpha)$ is an
indecomposable and uniserial object in $\scat$:

\begin{proposition} \label{p:ind}
Let $\vect \alpha \in I^n$ be an admissible vector. If $\sfam$ satisfies
(\uc), then $X(\vect \alpha)$ is indecomposable and uniserial.
\end{proposition}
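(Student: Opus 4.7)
The plan is to argue by induction on the length $n$. The base case $n = 1$ is immediate, since $X(\vect \alpha) \cong S_{\alpha(1)}$ is then simple, hence uniserial and indecomposable. For the inductive step, I fix a representative $(X, C) \in \extncl{n,*}$ realising $X(\vect \alpha)$, with cofiltration $X = C_n \to C_{n-1} \to \dots \to C_0 = 0$ and kernels $K_i \cong S_{\alpha(i)}$. The truncated cofiltration on $C_{n-1}$ defines an iterated extension in $\extncl{n-1,*}$ with admissible order vector $(\alpha(1), \dots, \alpha(n-1))$, so by the induction hypothesis $C_{n-1}$ is uniserial; in particular its unique minimal subobject is $K_{n-1}$.

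The key step, which I expect to be the main obstacle, is to show that $K_n$ is the unique minimal subobject of $X$. Suppose for contradiction that $S \subseteq X$ is a simple subobject distinct from $K_n$. Then $S \cap K_n = 0$, so $f_n$ restricts to an injection of $S$ into $C_{n-1}$, and $f_n(S)$ is a simple subobject of $C_{n-1}$. Uniseriality of $C_{n-1}$ forces $f_n(S) = K_{n-1}$, so $S \cong K_{n-1}$. The length-two subobject $S \oplus K_n \subseteq X$ is then contained in $Z = f_n^{-1}(K_{n-1})$, which also has length two, so the two coincide; the inclusion $S \hookrightarrow Z$ splits the short exact sequence $0 \to K_n \to Z \to K_{n-1} \to 0$ representing $\tau_n$, forcing $\tau_n = 0$. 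This contradicts Lemma \ref{l:indextnstr}, which under (\uc) gives $\tau_n \neq 0$.

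With $K_n$ identified as the unique minimal subobject, uniseriality of $X$ follows routinely. Any non-zero subobject $Y \subseteq X$ contains a simple subobject, which must equal $K_n$, so $K_n \subseteq Y$; then $Y/K_n$ is a subobject of the uniserial quotient $C_{n-1} = X/K_n$ and therefore appears in its unique composition series. Pulling back along $f_n$, I conclude that $Y$ lies in the filtration $F$ of $X$ dual to $C$, so the lattice of subobjects of $X$ is a chain. Hence $X$ is uniserial, and indecomposability follows at once from the implication $(1) \Rightarrow (3)$ of Lemma \ref{l:induni}. The whole argument rests on Lemma \ref{l:indextnstr}, itself a consequence of (\uc); once that non-vanishing of $\tau_n$ is in hand, the structural work is essentially a two-line induction.
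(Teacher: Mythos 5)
Your proof is correct and takes essentially the same route as the paper's: induction on $n$, with the key step being that any simple subobject of $X$ other than $K_n$ would inject onto the minimal subobject $K_{n-1}$ of $C_{n-1}$ under $f_n$ and thereby split $\tau_n$, contradicting Lemma \ref{l:indextnstr}. The only cosmetic difference is that you carry the full uniseriality statement through the induction, whereas the paper inducts on uniqueness of the minimal subobject and then applies that argument to each truncation $C_i$.
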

\begin{proof}
We claim that if $(X,C)$ is an iterated extension in $\extncl{n,*}$, then
there is a unique minimal subobject of $X$. The claim clearly holds if $n =
2$, and we shall prove the claim by induction on $n$. We therefore assume
that $n \ge 3$, and that the claim holds for all iterated extensions of
length less than $n$. To prove that it holds for iterated extensions of
length $n$, it is enough to prove to $\phi(K) = K_n$ for any injective
homomorphism $\phi: K \to X$. We consider the commutative diagram
\[ \xymatrix{
0 \ar[r] & K_n \ar[r] \ar[d] & f_n^{-1}(K_{n-1}) \ar[r] \ar[d] & K_{n-1}
\ar[r] \ar[d] & 0 \\
0 \ar[r] & K_n \ar[r] & X \ar[r]_{f_n} & C_{n-1} \ar[r] & 0
} \]
given by the cofiltration $C$, where the horizontal rows represent $\tau_n$
and $\xi_n$. Assume that $\phi(K) \neq K_n$, which implies that $\phi(K)
\cap K_n = 0$ since $K_n$ is simple, and consider the induced morphism $f_n
\circ \phi: K \to C_{n-1}$. We claim that this morphism is injective. In
fact, we have that $\ker(f_n \circ \phi) = \phi^{-1}(K_n) = 0$. By the
induction hypothesis, this means that $f_n \circ \phi(K) = K_{n-1}$. This
implies that $\phi(K) \subseteq f_n^{-1}(K_{n-1})$, which is a contradiction
since $\tau_n \neq 0$ by Lemma \ref{l:indextnstr}, and it follows that
$\phi(K) = K_n$. We have therefore proven the induction step, which means
that $X$ is indecomposable with a unique minimal subobject in $\scat$.
Finally, $X$ is uniserial since $C_i$ has a unique minimal submodule in
$\scat$ for $2 \le i \le n$. In fact, we can see this by applying the
argument above to the iterated extension $(C_i,C')$ in $\extncl{i,*}$, where
$C'$ is the cofiltration
    \[ C_i \xrightarrow{f_i} C_{i-1} \to \dots \to C_2 \xrightarrow{f_2}
    C_1 \to C_0 = 0 \]
obtained by capping $C$ at $C_i$.
\end{proof}

The next step in the classification, is to characterize the vectors $\vect
\alpha \in I^n$ that are admissible. If $\vect \alpha$ is admissible, then
by definition there is an iterated extension $(X,C)$ in $\extncl{n,*}$ with
order vector $\vect \alpha$, and it follows from Lemma \ref{l:indextnstr}
that
    \[ \ext^1_{\cat}(K_{i-1},K_i) = k \cdot \tau_i \neq 0 \]
for $2 \le i \le n$. This means that $\vect \alpha$ corresponds to a path of
length $n-1$ in the quiver $\Lambda$, with an arrow from node $\alpha(i-1)$
to node $\alpha(i)$ for $2 \le i \le n$.

Conversely, if $\vect \alpha \in I^n$ is a vector corresponding to a path of
length $n-1$ in the quiver $\Lambda$, such that
    \[ \ext^1_{\cat}(K_{i-1},K_i) \neq 0 \]
for $2 \le i \le n$, with $K_i = S_{\alpha(i)}$, then we may choose $\sigma_i
\in \ext^1_{\cat}(K_{i-1},K_i)$ with $\sigma_i \neq 0$. The vector $\vect
\alpha$ is admissible if there is an iterated extensions $(X,C)$ of length
$n$ in $\extn$ with $\tau_i = \sigma_i$ for $2 \le i \le n$, where $\tau_i$
is the extension induced by the cofiltration $C$. This is clearly the case
when $n = 2$, since $\sigma_2 \neq 0$ is a non-split extension
    \[ 0 \to K_2 \to E \to K_1 \to 0 \]
of $K_1$ by $K_2$. In fact, we may choose $X = E$ and the cofiltration $C$ of
length $n = 2$ given by $E \to K_1 \to 0$, which has $\tau_2 = \sigma_2$.
However, if $n \ge 3$, there are obstructions for the existence of such an
iterated extension:

\begin{proposition} \label{p:obstr-mmp}
Let $\vect \alpha \in I^n$ be a vector corresponding to a path of length
$n-1$ in the quiver $\Lambda$, and choose a non-zero extension $\sigma_i \in
\ext^1_{\cat}(K_{i-1},K_i)$ for $2 \le i \le n$. If $\sfam$ satisfies (\uc),
then $\vect \alpha$ is admissible if and only if the matric Massey product
    \[ \langle \sigma_2, \sigma_3, \dots, \sigma_n \rangle \]
is defined and contains zero.
\end{proposition}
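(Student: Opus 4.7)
The plan is to prove the proposition by induction on $n$, relating the obstructions to iteratively building a cofiltration with the defining systems for the matric Massey product. The base case $n=2$ is immediate: since $C_1 = K_1$, the non-zero element $\sigma_2 \in \ext^1_{\cat}(K_1, K_2)$ is represented by a short exact sequence $0 \to K_2 \to X \to K_1 \to 0$, giving an iterated extension $(X, C) \in \extncl{2,*}$ with $\tau_2 = \sigma_2$, and the Massey product condition is vacuous.

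For the inductive step, I identify the data of a cofiltration in $\extncl{n,*}$ with a defining system for $\langle \sigma_2, \dots, \sigma_n \rangle$. In the forward direction, given $(X, C) \in \extncl{n,*}$ with order vector $\vect \alpha$, the sequence of extension classes $\xi_i \in \ext^1_{\cat}(C_{i-1}, K_i)$ extracted from the cofiltration provides a defining system, once we normalize the $\xi_i$'s so that their images $\tau_i \in \ext^1_{\cat}(K_{i-1}, K_i)$ equal $\sigma_i$ on the nose; this is possible because both Ext groups are one-dimensional by Lemma \ref{l:indextnstr} and (\uc). The Massey product value for this defining system is zero, precisely because the $\xi_i$'s assemble into a global object $X$ in $\cat$ rather than producing an obstruction class in some $\ext^2$.

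In the reverse direction, I build the cofiltration inductively: assuming $(C_{i-1}, C|_{i-1}) \in \extncl{i-1,*}$ has been constructed from the first $i-1$ pieces of a defining system whose partial Massey products vanish, I try to extend by $\sigma_i$. The long exact sequence of $\hmm_{\cat}(-, K_i)$ applied to $0 \to K_{i-1} \to C_{i-1} \to C_{i-2} \to 0$ identifies the obstruction to lifting $\sigma_i$ to an element $\xi_i \in \ext^1_{\cat}(C_{i-1}, K_i)$ with the Yoneda product $\sigma_i \cdot \xi_{i-1} \in \ext^2_{\cat}(C_{i-2}, K_i)$. Unwinding this through the earlier stages of the induction, the cumulative obstruction to completing the construction coincides with the value of the matric Massey product $\langle \sigma_2, \dots, \sigma_n \rangle$ on the corresponding defining system, which is assumed to contain zero.

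The main obstacle is making the identification between defining systems for the matric Massey product and cofiltration data fully rigorous, especially the translation between the higher relations in the definition of $\langle \sigma_2, \dots, \sigma_n \rangle$ and the step-by-step lifting obstructions for the $C_i$'s. Fortunately, (\uc) together with Lemma \ref{l:indextnstr} forces all the relevant $\ext^1$ groups along the path in $\Lambda$ to be at most one-dimensional, so the matric bookkeeping collapses to a sequence of scalar lifting problems, and both directions of the induction align cleanly.
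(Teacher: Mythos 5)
Your overall strategy is sound and is in fact more ambitious than the paper's own proof, which simply passes to a module category via an exact embedding and cites Proposition 4 of Eriksen--Siqveland together with the construction preceding it. Your base case is fine, and your identification of the obstruction to lifting $\sigma_i$ along the restriction map $\ext^1_{\cat}(C_{i-1},K_i) \to \ext^1_{\cat}(K_{i-1},K_i)$ with the connecting homomorphism $\delta(\sigma_i) = \pm\,\sigma_i \cdot \xi_{i-1} \in \ext^2_{\cat}(C_{i-2},K_i)$ is correct; this correctly reduces admissibility to the vanishing of a sequence of connecting-map obstructions.

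The gap sits exactly at the point you flag as ``the main obstacle'' and then set aside. The lifting obstruction at stage $i$ lives in $\ext^2_{\cat}(C_{i-2},K_i)$, whereas the matric Massey product $\langle \sigma_2,\dots,\sigma_n\rangle$ is a subset, with indeterminacy, of $\ext^2_{\cat}(K_1,K_n)$ (for $n=3$ these agree since $C_1=K_1$, which is why your picture works at the bottom of the induction but not above it). The assertion that ``the cumulative obstruction coincides with the value of the matric Massey product on the corresponding defining system'' is precisely the content of the proposition, not a bookkeeping formality. To establish it you would need (a) a precise definition of a defining system in this setting (cochain representatives of the partial products on a projective resolution, or Hochschild cochains as in the noncommutative deformation calculus), (b) a verification that defining systems correspond exactly to the partial cofiltrations $C_2,\dots,C_{n-1}$ with $\tau_j=\sigma_j$ --- this is also what makes the product \emph{defined} for $n\ge 4$ --- and (c) the cochain-level identity equating the value of the product with the class $\delta(\sigma_n)$. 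Your appeal to (\uc) does not close this: one-dimensionality of the $\ext^1$ groups along the path controls the choices of the $\xi_i$ up to scalar, but it says nothing about the $\ext^2$ groups where the obstructions and the Massey product live, nor does it supply the required cochain identities. As written, you have proved a correct and useful reformulation of admissibility, but its identification with $\langle\sigma_2,\dots,\sigma_n\rangle$ is assumed rather than proved; completing the argument along your lines would essentially amount to reproducing the Eriksen--Siqveland construction, or the obstruction calculus of the noncommutative deformation functor from Section \ref{s:defm}.
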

\begin{proof}
Since $\scat$ is exact equivalent to an exact subcategory of a category of
modules over an associative $k$-algebra, we may assume that $\cat$ is such a
module category without loss of generality. In this case, the result follows
from Proposition 4 in Eriksen, Siqveland \cite{erik-siqv17}, and the
preceding construction.
\end{proof}

We remark that the use of an exact embedding of $\cat$ into a module category
is a choice of convenience in the proof of Proposition \ref{p:obstr-mmp}, and
not essential. Matric Massey products are tied to noncommutative deformations
and may be computed directly in many Abelian $k$-categories; see Eriksen,
Laudal, Siqveland \cite{erik-laud-siqv17}.

We say that $\scat$ is a \emph{hereditary length category} if $\ext^2_{\cat}
(S,T) = 0$ for any objects $S,T \in \sfam$. If this is the case, then the
obstruction in Proposition \ref{p:obstr-mmp} vanishes. This is clear from the
construction of matric Massey products.

We claim that any indecomposable object $X$ in $\scat$ has the form $X(\vect
\alpha)$ for an admissible vectors $\vect \alpha \in I^n$, and this would
complete the classification. To prove the claim, we must show that any
indecomposable object $X$ in $\scat$ has a cofiltration $C$ such that $(X,C)
\in \extncl{n,*}$:

\begin{proposition} \label{p:ind-nz}
Let $(X,C)$ be an iterated extension of length $n$ in $\extn$ such that $X$
is indecomposable. If $\sfam$ satisfies (\uc), then $\xi_i \ne 0$ for $2 \le
i \le n$.
\end{proposition}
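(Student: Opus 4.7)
\medskip\noindent\emph{Proof plan.}
The plan is to proceed by strong induction on the length $n$. The base cases are immediate: for $n = 1$ there is nothing to prove, and for $n = 2$, $\xi_2 = 0$ would force $X \cong K_1 \oplus K_2$, contradicting indecomposability of $X$.

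For the inductive step with $n \ge 3$, I would consider the truncated iterated extension $(C_{n-1}, C_{\le n-1})$ of length $n-1$ and split into two cases. If $C_{n-1}$ is indecomposable, then the induction hypothesis gives $\xi_i \ne 0$ for $2 \le i \le n-1$, and $\xi_n \ne 0$ follows immediately since otherwise $X \cong C_{n-1} \oplus K_n$ would contradict indecomposability. Hence the heart of the argument, and the main obstacle, is ruling out the possibility that $C_{n-1}$ is decomposable.

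Supposing $C_{n-1}$ is decomposable, I would apply Krull-Schmidt in $\scat$ to write $C_{n-1} = A \oplus B \oplus D$ with $A, B$ indecomposable non-zero summands and $D$ the remainder, and decompose $\xi_n = (\xi_n^A, \xi_n^B, \xi_n^D)$ under the induced splitting of $\ext^1_{\cat}(C_{n-1}, K_n)$. Vanishing of any component splits off the corresponding summand from $X$ and contradicts indecomposability, so I may assume $\xi_n^A, \xi_n^B \ne 0$. The induction hypothesis applied to $A$ and $B$ (which have length less than $n$), combined with Proposition \ref{p:ind}, shows that $A$ and $B$ are uniserial; the non-vanishing of $\xi_n^A$ and $\xi_n^B$ then forces via (\uc) that $\soc A \cong \soc B \cong S_\gamma$, where $\gamma$ is the unique in-neighbor of $\alpha(n)$ in $\Lambda$. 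Walking backwards from $\gamma$ using the second half of (\uc), the order vectors of $A$ and $B$ coincide on their terminal entries; thus, WLOG $m_A \le m_B$, and $A$ is isomorphic to the bottom subobject $B_{\le m_A} \subseteq B$, yielding a non-zero morphism $\iota : A \to B$.

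To finish the argument, I would show that the pullback $\iota^* : \ext^1_{\cat}(B, K_n) \to \ext^1_{\cat}(A, K_n)$ is non-zero and that $\ext^1_{\cat}(A, K_n)$ is one-dimensional: the first holds because the preimage of $A$ in the non-split extension $0 \to K_n \to E_B \to B \to 0$ is a uniserial subobject of $E_B$, hence a non-split extension of $A$ by $K_n$; the second follows from the long exact sequence attached to the last step of the cofiltration of $A$ together with Lemma \ref{l:indextnstr} and (\uc). Choosing $c \in k$ so that $\xi_n^A + c \cdot \iota^* \xi_n^B = 0$, the automorphism $\phi$ of $C_{n-1}$ given by $\phi(a, b, d) = (a, b + c \iota(a), d)$ pulls $\xi_n$ back to an extension with vanishing $A$-component, whence $X \cong A \oplus E$ for $E$ an extension of $B \oplus D$ by $K_n$; this contradicts indecomposability of $X$. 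The last verification --- the one-dimensionality of $\ext^1_{\cat}(A, K_n)$ and the explicit transformation of $\xi_n$ under $\phi^*$ --- is the most delicate step of the proof.
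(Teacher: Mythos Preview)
Your proposal is correct and follows essentially the same route as the paper's proof: both argue by induction on $n$, apply Krull--Schmidt to $C_{n-1}$, observe that every component of $\xi_n$ must be non-zero, use (\uc) to force the indecomposable summands to have terminally coinciding order vectors (so the shorter embeds into the longer), and then construct a shear automorphism of $C_{n-1}$ that kills one component of $\xi_n$, splitting off a summand of $X$ and contradicting indecomposability. The paper orders all $q$ summands by length and works with $Y_1 \subseteq Y_2$, while you isolate two summands $A,B$ and a remainder $D$, but this is only a cosmetic difference; your treatment of the one-dimensionality of $\ext^1_{\cat}(A,K_n)$ and of why $\iota^*\xi_n^B \ne 0$ is in fact slightly more explicit than the paper's.
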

\begin{proof}
The result clearly holds if $n = 2$, and we shall prove the claim by
induction on $n$. We therefore assume that $n \ge 3$, and that the claim
holds for all iterated extensions of length less than $n$. For an iterated
extension $(X,C)$ of length $n$ in $\extn$, where $X$ is indecomposable,
there is a non-split, short exact sequence $0 \to K_n \to X \to C_{n-1} \to
0$, and $C_{n-1}$ has a direct decomposition
    \[ C_{n-1} = Y_1 \oplus \dots \oplus Y_q \]
such that $Y_j$ is an indecomposable object in $\scat$ of length $n_j$ for $1
\le j \le q$. If $q = 1$, then $\xi_i \neq 0$ for $2 \le i \le n-1$ by the
induction hypothesis, and $\xi_n \neq 0$ since $X$ in indecomposable. Hence
we have proved the induction step if $q = 1$. Next, we suppose that $q > 1$,
and show that this leads to a contradiction: Choose a cofiltration of $Y_j$
given by
    \[ Y_j = C_{j,n_j} \xrightarrow{f_{j,n_j}} C_{j,n_j-1} \to \dots \to
    C_{j,1} \xrightarrow{f_{j1}} C_{j,0} = 0 \]
for $1 \le j \le q$, with $K_{ji} = \ker(f_{ji}) \cong S_{\alpha(j,i)}$ for
$1 \le i \le n_j$ and for some $\alpha(j,i) \in I$. Since
    \[ \ext^1_{\cat}(C_{n-1},K_n) \cong \osum_j \ext^1_{\cat}(Y_j, K_n) \]
we have $\xi_n = (\xi_{n,1}, \dots, \xi_{n,q})$, and we claim that $\xi_{n,j}
\ne 0$ for $1 \le j \le q$. In fact, if $\xi_{n,j}=0$ for some $j$, then the
short exact sequence
    \[ 0 \to K_n \to f_n^{-1}(Y_j) \to Y_j \to 0 \]
splits, hence there is a section of $X \to Y_j$ making $Y_j$ a direct summand
of $X$. This is a contradiction, and therefore we must have $\xi_{n,j} \neq
0$ for $1 \le j \le q$. Since $Y_j$ is indecomposable of length $n_j<n$ for
$1 \le j \le q$, it follows from the induction hypothesis that the extensions
$\xi_{ij} \in \ext^1_{\cat}(C_{j,i-1},K_{ji}) \ne 0$ for $2 \le i \le n_j$.
Let $X_j = f_n^{-1}(Y_j) \subseteq X$, with induced surjective morphism $f_n:
X_j \to Y_j = C_{j,n_j}$. Then $(X_j,C_{j\ast})$ is an iterated extension in
$\extncl{n_j,*}$, given by
    \[ X_j = C_{j,n_j+1} \xrightarrow{f_n} N_j = C_{j,n_j} \to \cdots \to
    C_{j,1} \to C_{j,0} = 0 \]
Therefore, it follows from Lemma \ref{l:indextnstr} that $\ext^1_{\cat}(N_j,
K_n) \to \ext^1_{\cat}(K_{j,n_j},K_n)$ is an isomorphism and that the image
$\tau_{n,j}$ of $\xi_{n,j}$ is non-zero for $1 \le j \le n$. Hence, we must
have that
    \[ \alpha(1,n_1) = \alpha(2,n_2) = \dots = \alpha(q,n_q) \]
is the unique node in $\Lambda$ with an arrow to node $\alpha(n)$. In a
similar manner, it follows that $\ext^1_{\cat}(C_{j,n_j-i},K_{j,n_j-i+1}) \to
\ext^1_{\cat}(K_{j,n_j-i},K_{j,n_j-i+1})$ is an isomorphism and therefore that
$\tau_{j,n_j-i+1} \neq 0$ for $1 \le j \le q$ and $1 \le i \le \min(n_1, \dots,
n_q) -1$. Hence we must have that
    \[ \alpha(1,n_1-i) = \alpha(2,n_2-i) = \dots = \alpha(q,n_q-i) \]
Without loss of generality, we may assume that $n_1 \le n_2 \le \dots \le
n_q$. From the argument above, it follows that $Y_1 \subseteq Y_2 \subseteq
\dots \subseteq Y_q$. For any injective morphism $Y_1 \to Y_1 \oplus Y_2$ that
has the form $y_1 \mapsto (y_1, C y_1)$ with $C \in k$, there is an induced
injective map $i: Y_1 \to C_{n-1} = N_1 \oplus \dots \oplus N_q$, and we may
consider the commutative diagram
\[ \xymatrix{
0 \ar[r] & K_n \ar[r] \ar@{=}[d] & X \ar[r]^{f_n} & C_{n-1} \ar[r] & 0 \\
0 \ar[r] & K_n \ar[r] & f_n^{-1}(i(Y_1)) \ar[r] \ar[u] & Y_1 \ar[r]
\ar[u]_i & 0
} \]
where the first row is the extension $\xi_n = (\xi_{n,1}, \dots, \xi_{n,q})$.
We claim that we may choose $C$ such that second row is a split extension. In
fact, this follows from the fact that there are $c, \; c_i \in k^*$ such that
$\tau_{n,1} = c \cdot \tau_{n,2}$ and
    \[ \tau_{1,n_1-i+1} = c_i \cdot \tau_{2,n_2-i+1} \]
for $1 \le i \le n_1-1$. It follows that $Y_1$ is a direct summand in $X$,
and this contradicts the assumption $q > 1$.
\end{proof}

\begin{theorem} \label{t:exclass}
Let $\sfam = \{ S_{\alpha}: \alpha \in I \}$ be a family of orthogonal
$k$-rational points in an Abelian $k$-category $\cat$. If $\sfam$ satisfies
(\uc), then the indecomposable objects in $\scat$ of length $n$ are given by
$\{ X(\vect \alpha): \vect \alpha \in \mathcal J \}$, up to isomorphism in
$\scat$, where the subset $\mathcal J \subseteq I^n$ consists of the vectors
$\vect \alpha$ such that the following conditions hold:
\begin{enumerate}
\item
$\ext^1_{\cat}(S_{\alpha(i-1)},S_{\alpha(i)}) \neq 0$ for $2 \le i \le n$
\item
If $\sigma_i \in \ext^1_{\cat}(S_{\alpha(i-1)},S_{\alpha(i)})$ is non-zero
for $2 \le i \le n$, then the matric Massey product $\langle \sigma_2,
\sigma_3, \dots, \sigma_n \rangle$ is defined and contains zero.
\end{enumerate}
Moreover, the indecomposable objects $X(\vect \alpha)$ are uniserial, and
can be constructed from the family $\sfam$ and their extensions.
\end{theorem}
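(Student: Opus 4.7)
The strategy is to assemble the preceding propositions into a classification. The plan has three parts: show that every indecomposable object in $\scat$ of length $n$ arises as some $X(\vect\alpha)$ with $\vect\alpha \in \mathcal J$ (surjectivity), show that the list $\{X(\vect\alpha) : \vect\alpha \in \mathcal J\}$ is well-defined and the objects in it are indecomposable and uniserial (existence and properties), and finally show that distinct vectors in $\mathcal J$ yield non-isomorphic objects (injectivity).

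First I would handle existence. Given $\vect\alpha \in \mathcal J$, condition~(1) says $\vect\alpha$ corresponds to a path in $\Lambda$ and condition~(2), combined with Proposition~\ref{p:obstr-mmp}, guarantees admissibility in the sense of Section~\ref{s:ind}. Hence there is an iterated extension $(X,C) \in \extncl{n,*}$ with order vector $\vect\alpha$, and by the remark preceding the definition of $X(\vect\alpha)$ the object $X$ is unique up to isomorphism in $\scat$. Proposition~\ref{p:ind} then gives that $X(\vect\alpha)$ is indecomposable and uniserial, and its construction is manifestly built from iterated non-split extensions of the points in $\sfam$.

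Next I would handle surjectivity. Let $X$ be an indecomposable object in $\scat$ of length $n$. Since $\scat$ is a length category, $X$ admits a composition series, which dualizes (via the correspondence between filtrations and cofiltrations recalled in Section~\ref{s:itextn}) to give a cofiltration $C$ with $(X,C) \in \extn$ of length $n$. Because $X$ is indecomposable, Proposition~\ref{p:ind-nz} yields $\xi_i \ne 0$ for $2 \le i \le n$, so in fact $(X,C) \in \extncl{n,*}$. Let $\vect\alpha$ be its order vector. By Lemma~\ref{l:indextnstr}, each $\tau_i \ne 0$, so condition~(1) holds; and by Proposition~\ref{p:obstr-mmp}, the admissibility of $\vect\alpha$ is equivalent to condition~(2). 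Thus $\vect\alpha \in \mathcal J$, and by uniqueness $X \cong X(\vect\alpha)$ in $\scat$.

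For injectivity, suppose $X(\vect\alpha) \cong X(\vect\alpha')$ in $\scat$ with $\vect\alpha, \vect\alpha' \in \mathcal J$. By Proposition~\ref{p:ind} both objects are uniserial, hence each admits a \emph{unique} composition series, and the order vector is just the sequence of simple factors of that series read from top to bottom; therefore $\vect\alpha = \vect\alpha'$. This step, although easy, is where I expect the only subtlety: one must remember that the order vector is attached to the cofiltration, but uniseriality upgrades it to an invariant of the isomorphism class. Combining the three parts gives the stated bijection between $\mathcal J$ and the isomorphism classes of indecomposable objects of length $n$ in $\scat$, and the uniseriality and constructive description are already built into the definition of $X(\vect\alpha)$ and Proposition~\ref{p:ind}.
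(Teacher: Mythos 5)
Your proof is correct and follows essentially the same route as the paper, whose proof simply states that the theorem follows from the results of Section \ref{s:ind}: you assemble Proposition \ref{p:obstr-mmp} (admissibility $\Leftrightarrow$ the Massey product condition), Proposition \ref{p:ind} (indecomposability and uniseriality of $X(\vect\alpha)$), Proposition \ref{p:ind-nz} and Lemma \ref{l:indextnstr} (every indecomposable lies in $\extncl{n,*}$ with all $\tau_i \neq 0$) exactly as intended. Your explicit injectivity step, recovering the order vector from the unique composition series of a uniserial object, is a point the paper leaves implicit and is a worthwhile addition.
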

\begin{proof}
This follows from the results in this section. The explicit construction
of $X(\vect \alpha)$ is obtained by iteratively constructing $C_i$ as an
extension of $C_{i-1}$ by $K_i$ for $2 \le i \le n$.
\end{proof}

\begin{corollary} \label{c:uniser-crit}
Let $\sfam = \{ S_{\alpha}: \alpha \in I \}$ be a family of orthogonal
$k$-rational points in an Abelian $k$-category $\cat$. Then $\scat$ is
uniserial if and only if (\uc) holds.
\end{corollary}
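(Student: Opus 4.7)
The plan is to package together the results that have already been assembled in this section, observing that each direction of the biconditional corresponds to one of the main propositions proved above.

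For the forward direction, assume $\scat$ is uniserial. Proposition \ref{p:uniser-crit} already states that in this case, for each $\alpha \in I$ there is at most one arrow with source $\alpha$ in the quiver $\Lambda$, and for each $\beta \in I$ there is at most one arrow with target $\beta$. Since $\sfam$ consists of $k$-rational points, the number of arrows from $\alpha$ to $\beta$ in $\Lambda$ is precisely $\dim_k \ext^1_{\cat}(S_{\alpha},S_{\beta})$, so these two conditions translate verbatim into the two inequalities that define (\uc). Hence (\uc) holds.

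For the converse, assume (\uc) holds and let $X \in \scat$ be indecomposable of length $n$. By Theorem \ref{t:exclass}, $X$ is isomorphic in $\scat$ to $X(\vect \alpha)$ for some admissible order vector $\vect \alpha \in \mathcal J \subseteq I^n$, and Proposition \ref{p:ind} asserts that every such $X(\vect \alpha)$ is uniserial. Since every indecomposable object of $\scat$ is thus uniserial, $\scat$ is a uniserial category by definition.

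There is essentially no obstacle here because all the substantive work has been done: the three counterexample lemmas (\ref{l:ce1}, \ref{l:ce2}, \ref{l:ce3}) supply the forward implication by ruling out the three forbidden subquiver shapes, while the classification (Theorem \ref{t:exclass}), together with the uniseriality of the $X(\vect \alpha)$ (Proposition \ref{p:ind}), supplies the converse. The only point to verify is the translation between the combinatorial statement of Proposition \ref{p:uniser-crit} in terms of arrows of $\Lambda$ and the numerical form (\uc), which is immediate under the $k$-rationality hypothesis. Accordingly, the proof will consist of two short paragraphs invoking these results, with a single sentence remarking on the dictionary between arrows in $\Lambda$ and $\dim_k \ext^1_{\cat}(S_\alpha,S_\beta)$.
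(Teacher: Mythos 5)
Your proposal is correct and follows essentially the same route as the paper, which derives the corollary directly from Proposition \ref{p:uniser-crit} (forward direction) and Theorem \ref{t:exclass} (converse); your additional remarks on the dictionary between arrows of $\Lambda$ and $\dim_k \ext^1_{\cat}(S_\alpha,S_\beta)$ and the explicit appeal to Proposition \ref{p:ind} simply spell out what the paper leaves implicit.
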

\begin{proof}
This follows from Proposition \ref{p:uniser-crit} and Theorem
\ref{t:exclass}.
\end{proof}

There is a more general form of Corollary \ref{c:uniser-crit}, where the
points in $\sfam$ are not assumed to be $k$-rational; see Gabriel
\cite{gabr73}. We have chosen to work with $k$-rational points out of
convenience, and also because all points are $k$-rational in the
applications we have in mind. However, it would be possible to prove the
general form of Theorem \ref{t:exclass} and Corollary \ref{c:uniser-crit}
using the methods of this paper.

\section{Graded holonomic D-modules on monomial curves}

Let $\Gamma \subseteq \n_0$ be a numerical semigroup, generated by positive
integers $a_1, \dots, a_r$ without common factors, and let $A = k[\Gamma]
\cong k[t^{a_1}, \dots, t^{a_r}]$ be its semigroup algebra over the field $k
= \cc$ of complex numbers. We call $A$ a \emph{monomial curve} since $X =
\spec(A)$ is the affine monomial curve $X = \{ (t^{a_1}, t^{a_2}, \dots,
t^{a_r}): t \in k \} \subseteq \aff^r_k$.

We studied the positively graded algebra $D$ of differential operators on the
monomial curve $A = k[\Gamma]$ in Eriksen \cite{erik03}, and the category
$\grdhol$ of graded holonomic left $D$-modules in Eriksen \cite{erik18}. We
recall that any $D$-module $M$ satisfies the Bernstein inequality $d(M) \ge
1$, that $M$ is holonomic if $d(M) = 1$, and that this condition holds if and
only if $M$ has finite length; see Proposition 4 and Proposition 5 in Eriksen
\cite{erik18}. This implies that $\grdhol$ is a length category, and its
simple objects are given by
    \[ \{ M_0[w]: w \in \z \} \cup \{ M_{\alpha}[w]: \alpha \in J^*, \; w \in
    \z \} \cup \{ M_{\infty}[w]: w \in \z \} \]
where $J^* = \{ \alpha \in \cc: 0 \le \re(\alpha) < 1, \; \alpha \neq 0 \}$;
see Theorem 10 in Eriksen \cite{erik18}. Moreover, the graded extensions of
the simple objects are given by
    \[ \ext^1_D(M_{\alpha}[w],M_{\beta}[w'])_0 = \begin{cases} k \xi \cong k,
    & (\alpha,\beta) = (0,\infty), (\infty,0) \text{ and } w = w'\\ k \xi
    \cong k, & \alpha = \beta \in J^* \text{ and } w = w' \\ 0, &
    \text{otherwise} \end{cases} \]
for simple graded $D$-modules $M_{\alpha}[w], M_{\beta}[w']$ with $\alpha,
\beta \in J^* \cup \{ 0, \infty \}$ and $w,w' \in \z$; see Proposition 12 in
Eriksen \cite{erik18}.

\begin{proposition}
The family $\sfam = \{ M_{\alpha}[w]: \alpha \in J^* \cup \{ 0, \infty \}, \;
w \in \z \}$ is the family of simple objects in $\grdhol$, and it is a family
of orthogonal $k$-rational points that satisfies (\uc). In particular, the
category $\grdhol$ of graded holonomic $D$-modules is a uniserial category.
\end{proposition}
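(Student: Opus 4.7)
The plan is to verify the four hypotheses of Corollary \ref{c:uniser-crit} for the family $\sfam$, namely that (a) $\sfam$ is the family of simple objects of $\grdhol$, (b) the objects in $\sfam$ are pairwise orthogonal, (c) each is $k$-rational, and (d) the species of $\grdhol$ satisfies (\uc); then the uniseriality is automatic.

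Step (a) is essentially a citation: the identification of $\sfam$ with the simples of $\grdhol$ is Theorem 10 in Eriksen \cite{erik18}, which has already been recalled in the paragraph immediately preceding the proposition. Steps (b) and (c) are Schur-type arguments in the graded category. For orthogonality, I would observe that a nonzero morphism between graded simple $D$-modules is necessarily an isomorphism of graded modules, so $\hmm_{\grdhol}(M_{\alpha}[w], M_{\beta}[w']) = 0$ whenever the pairs $(\alpha,w)$ and $(\beta,w')$ in $(J^* \cup \{0,\infty\}) \times \z$ are distinct. For $k$-rationality, I would use that $\enm_{\grdhol}(M_{\alpha}[w])$ is a graded division ring concentrated in degree $0$, and that on the explicit models for $M_{\alpha}[w]$ given in \cite{erik18} this endomorphism ring is finite-dimensional over $k = \cc$; since $\cc$ is algebraically closed, it must equal $k$.

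Step (d) is a direct inspection of the graded Ext formula displayed just before the proposition. Reading off the arrows in the quiver $\Lambda$ at each node:
\begin{align*}
\text{from } M_0[w]: &\quad \text{one arrow, to } M_{\infty}[w], \\
\text{from } M_{\infty}[w]: &\quad \text{one arrow, to } M_0[w], \\
\text{from } M_{\alpha}[w] \text{ with } \alpha \in J^*: &\quad \text{one arrow, a loop at } M_{\alpha}[w],
\end{align*}
and dually each node has at most one incoming arrow. Hence both sums in (\uc) are equal to $1$ at every simple, and (\uc) holds. Applying Corollary \ref{c:uniser-crit} then concludes that $\grdhol$ is a uniserial length category.

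The only real obstacle is the verification of orthogonality and $k$-rationality, since these require unwinding the explicit description of the $M_{\alpha}[w]$ from \cite{erik18}; everything else is either a citation or a tabulation from the Ext formula. In particular, no new computation of extensions or Massey products is needed at this stage, because the species has already been determined in \cite{erik18}.
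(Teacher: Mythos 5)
Your proposal is correct and follows the same overall route as the paper: cite Eriksen [erik18] for the identification of the simple objects, note orthogonality by Schur's lemma, and read off (\uc{}) from the displayed graded Ext formula before invoking Corollary \ref{c:uniser-crit}. The one place where you genuinely diverge is the verification of $k$-rationality: the paper disposes of this in one line by citing Quillen's theorem, which shows that every endomorphism of a simple module over such a filtered algebra is algebraic over $k$, hence scalar since $k = \cc$ is algebraically closed; you instead propose to show that $\enm_D(M_{\alpha}[w])$ is finite-dimensional over $\cc$ by inspecting the explicit models. Your route works (finite-dimensionality of endomorphism rings of holonomic modules holds here, e.g.\ via the Morita equivalence with the Weyl algebra), but it is the more laborious of the two, since it requires unwinding the models case by case, whereas Quillen's theorem applies uniformly and requires no finite-dimensionality input. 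Everything else in your argument matches the paper's proof.
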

\begin{proof}
Since $k = \cc$ is algebraically closed, it follows from the main theorem in
Quillen \cite{quil69} that $\enm_D(M_{\alpha}[w]) = k$ for all $\alpha \in
J^* \cup \{ 0, \infty \}, \; w \in \z$. Moreover, the comments above show
that $\sfam$ is the family of simple objects in $\grdhol$, and therefore a
family or orthogonal $k$-rational points, which satisfies (\uc).
\end{proof}

It is, in principle, possible to construct all indecomposable objects in
$\grdhol$ using the constructive proof of Theorem \ref{t:exclass}. As an
illustration, we shall classify the indecomposable objects in the case $A =
k[t]$, which is the unique smooth monomial curve. The classification would be
similar in the other cases, since all rings of differential operators on
monomial curves are Morita equivalent. However, the indecomposable objects
would be defined by more complicated equations in the singular cases.

Note that when $A = k[t]$, the ring $D$ of differential operators on $A$ is
the first Weyl algebra $\weyl = k[t]\langle\partial\rangle$, with generators
$t$ and $\partial = d/dt$, and relation $[\partial,t] = 1$. Let us write $E =
t \partial$ for the Euler derivation in $D$. The simple objects in $\grdhol$,
up to graded isomorphisms and twists, are given by $M_0 = D/D \partial$,
$M_{\infty} = D/Dt$ and $M_{\alpha} = D/D (E - \alpha)$ for $\alpha \in J^*$.

\begin{theorem}
Let $D = \weyl$ be the first Weyl algebra. The indecomposable graded holonomic
$D$-module, up to graded isomorphisms and twists, are given by
    \[ M(\alpha,n) = D/D \, (E-\alpha)^n, \quad M(\beta,n) = D/D \, \mathbf
    w(\beta,n) \]
where $n \ge 1$, $\alpha \in J^*$, $\beta \in \{ 0, \infty \}$, and $\mathbf
w(\beta,n)$ is the alternating word on $n$ letters in $t$ and $\partial$,
ending with $\partial$ if $\beta = 0$, and in $t$ if $\beta = \infty$.
\end{theorem}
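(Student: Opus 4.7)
The plan is to apply Theorem \ref{t:exclass} to the length category $\grdhol$, using the species computed in the preceding proposition. From the Ext formula recalled above, the quiver $\Lambda$ of $\grdhol$ decomposes, up to twisting, as follows: each $\alpha \in J^*$ contributes an isolated loop at the node $M_\alpha$, while the two nodes $M_0$ and $M_\infty$ are joined by a pair of oppositely-directed arrows (there are no arrows between different twists). Since $D = \weyl$ has global dimension one, $\grdhol$ is a hereditary length category, so the matric Massey product obstruction in Theorem \ref{t:exclass} vanishes automatically. The admissible order vectors of length $n$ are therefore exhausted, up to twist, by the constant vectors $(\alpha, \alpha, \ldots, \alpha)$ with $\alpha \in J^*$ together with the two alternating vectors $(0, \infty, 0, \ldots)$ and $(\infty, 0, \infty, \ldots)$ of length $n$.

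For each admissible vector I would exhibit a cyclic $D$-module realising $X(\vect \alpha)$. The key computation is that, since $D$ is a Noetherian domain, any nontrivial factorisation $u = \ell \cdot u'$ in $D$ yields a short exact sequence
\[ 0 \to D/D\ell \to D/Du \to D/Du' \to 0, \]
in which the injection is right multiplication by $u'$ and the identification of the kernel uses that $D$ has no zero divisors. Iterating this with $u = (E-\alpha)^n$ and $\ell = E-\alpha$ produces a cofiltration of $M(\alpha, n) = D/D(E-\alpha)^n$ whose kernels are all isomorphic to $M_\alpha = D/D(E-\alpha)$, giving the order vector $(\alpha, \ldots, \alpha)$. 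Iterating the same sequence with $u = \mathbf w(\beta, n)$, factoring off the leftmost letter at each stage, produces a cofiltration of $M(\beta, n) = D/D\mathbf w(\beta, n)$ whose kernels alternate between $M_0 = D/D\partial$ and $M_\infty = D/Dt$, with top quotient $M_\beta$, matching the alternating order vectors.

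To conclude, I would verify that these cofiltrations lie in $\extncl{n, *}$, equivalently that each module has a unique simple subobject. For $M(\alpha, n)$ this is the direct computation $\{ m \in D : (E-\alpha) m \in D(E-\alpha)^n \} = D(E-\alpha)^{n-1}$, which identifies the socle with $D(E-\alpha)^{n-1}/D(E-\alpha)^n \cong M_\alpha$; the corresponding computation for $M(\beta, n)$ is analogous. Proposition \ref{p:ind-nz} then ensures that $\xi_i \neq 0$ for every $i$, and the uniqueness clause of Theorem \ref{t:exclass} identifies each of our modules with the corresponding $X(\vect \alpha)$. I expect the main obstacle to be the combinatorial bookkeeping in the alternating case, where one must keep track of which of $t$ and $\partial$ is the leftmost letter at each recursive stage in order to match the order vector correctly; everything else reduces to the factorisation-and-cancellation identity above, which works because $D$ is a domain. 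Once the bookkeeping is complete, the theorem follows: every indecomposable graded holonomic $D$-module is, up to graded isomorphism and twist, one of the listed $M(\alpha, n)$ or $M(\beta, n)$.
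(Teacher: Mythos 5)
Your proposal follows essentially the same route as the paper's proof: read the quiver off the graded $\ext^1$ computation, use that $\weyl$ is hereditary to kill the matric Massey product obstruction, realise each admissible order vector by a cyclic module via factorisations $u = \ell u'$ in the domain $D$, and conclude by the uniqueness of $X(\vect \alpha)$. The one genuine divergence is the final identification step: the paper simply observes that each factorisation extension $0 \to D/D\ell \to D/Du \to D/Du' \to 0$ is non-split, which gives $\xi_i \neq 0$ directly and places the cofiltration in $\extncl{n,*}$, with Lemma \ref{l:indextnstr} then pinning the module down as $X(\vect \alpha)$; you instead detour through the socle and Proposition \ref{p:ind-nz}. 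That detour is logically sound, but your key displayed computation is misstated: the set $\{ m \in D : (E-\alpha) m \in D(E-\alpha)^n \}$ is \emph{not} equal to $D(E-\alpha)^{n-1}$. For example $t(E-\alpha)^{n-1}$ lies in $D(E-\alpha)^{n-1}$ but not in the left-hand set, because $(E-\alpha)t = t(E+1-\alpha)$ and $E+1-\alpha$ is coprime to $E-\alpha$ in $k[E]$; indeed the left-hand set is not even a left ideal. What you actually need, and what is true, is the single inclusion $\{ m : (E-\alpha) m \in D(E-\alpha)^n \} \subseteq D(E-\alpha)^{n-1}$ (checked on graded components via $(E-\alpha)t^j = t^j(E+j-\alpha)$ and the analogous identity for $\partial^j$): every generator of a simple submodule lies in the left-hand set, hence in $D(E-\alpha)^{n-1}/D(E-\alpha)^n \cong M_\alpha$, which is therefore the unique simple submodule. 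With that correction, and the analogous (and, as you anticipate, more fiddly) bookkeeping for the alternating words $\mathbf w(\beta,n)$, your argument goes through and recovers the paper's classification.
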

\begin{proof}
Let us write $I = J^* \cup \{ 0, \infty \}$, such that $\sfam = \{ M_{\alpha}
[w]: (\alpha,w) \in I \times \z \}$ is the family of simple objects in
$\grdhol$. It follows from the computation of the graded extensios above that
for any length $n \ge 1$ and any $(\alpha,w) \in I \times \z$, there is a
unique path
    \[ (\alpha,w) = (\alpha(1),w_1) \to (\alpha(2),w_2) \to \dots \to
    (\alpha(n),w_n) \]
in $\Lambda$ such that $\ext^1_D(M(\alpha_{i-1})[w_{i-1}],M(\alpha_i)[w_i])_0
\neq 0$ for $2 \le i \le n$. The corresponding vector is admissible since $D
= \weyl$ is a hereditary graded ring; see for instance Coutinho \cite{cout95}.
Note that if $\alpha \in J^*$, then $\alpha(i) = \alpha$ and $w_i = w$ for $1
\le i \le n$, and if $\alpha \in \{ 0, \infty \}$, then we have
    \[ (\alpha(i),w(i)) = \begin{cases} (\alpha,w), & i \text{ is odd} \\
    (0,w-1), & i \text{ is even}, \; \alpha = \infty \\ (\infty,w+1), &
    i \text{ is even}, \; \alpha = 0 \end{cases} \]
for $1 \le i \le n$. The rest follows from the fact that $\ext^1(C_{i-1},K_i)
\to \ext^1(K_{i-1},K_i)$ is an isomorphism by Lemma \ref{l:indextnstr}, and
that the graded extensions obtained using factorization in $D$, such as
    \[ 0 \to D/D(E-\alpha) \to D/(E-\alpha)^n \to D/D(E-\alpha)^{n-1} \to 0 \]
for $\alpha \in J^*$ and $n \ge 2$, are non-split.
\end{proof}

\section{Iterated extensions and noncommutative deformations} \label{s:defm}

Let $\sfam = \{ S_{\alpha}: \alpha \in I \}$ be a family of orthogonal points
in an Abelian $k$-category $\cat$, and let $\scat$ be the corresponding
length category. In this section, we consider the noncommutative deformations
of finite subfamilies of $\sfam$; see Laudal \cite{laud02} and also Eriksen,
Laudal, Siqveland \cite{erik-laud-siqv17}, and show that they determine the
iterated extensions in $\extn$. This will shed light on some of the results
for uniserial length categories in this paper, and in particular Propostion
\ref{p:obstr-mmp}. It will also provide a useful tool for future study of
length categories that are more complicated than the uniserial ones.

Let $(X,C)$ be an iterated extension in $\extn$ of length $n$ with order
vector $\vect \alpha$. We define the \emph{extension type} of $(X,C)$ to be
the ordered quiver $\Gamma$ with nodes $\{ \alpha(1), \alpha(2), \dots,
\alpha(n) \}$ and edges $\gamma_{i-1,i}$ from node $\alpha(i-1)$ to node
$\alpha(i)$ for $2 \le i \le n$. The quiver is ordered in the sense that
there is a total order $\gamma_{12} < \gamma_{23} < \dots < \gamma_{n-1,n}$
on the edges in $\Gamma$. Clearly, the extension type $\Gamma$ is uniquely
defined by the order vector $\vect \alpha$, and isomorphic iterated
extensions have the same extension type. We denote by $\extniso \Gamma$ the
set of isomorphism classes of iterated extensions of the family $\sfam$ with
extension type $\Gamma$.

To fix notation, we shall give the set $J = \{ \alpha(1), \alpha(2), \dots,
\alpha(n) \} \subseteq I$ of nodes in $\Gamma$ a total order, and write
    \[ S(\Gamma) = \{ S_{\alpha}: \alpha \in J \} = \{ X_1, \dots, X_r \} \]
for the associated subfamily of $\sfam$, considered as an ordered set with
$X_1 < X_2 < \dots < X_r$. Note that for $1 \le l \le r$, we have that $X_l
= S_{\alpha(i)}$ for at least one value of $i$ with $1 \le i \le n$, and
that $r \le n$, with $r < n$ if there are repeated factors.

The \emph{path algebra} $k[\Gamma]$ of the ordered quiver $\Gamma$ is the
$k$-algebra with base consisting of paths $\gamma_{i-1,i} \cdot \gamma_{i,
i+1} \cdot \dots \cdot \gamma_{j-1,j}$ of length $j-i+1$ for $2 \le i \le
j \le n$. The product of two paths $\gamma \cdot \gamma'$ is given by
juxtaposition when the last arrow $\gamma_{j-1,j}$ in the first path
$\gamma$ is the predecessor of the first arrow $\gamma_{j,j+1}$ in the
second path $\gamma'$ in the total ordering, and otherwise the product
$\gamma \cdot \gamma' = 0$. We consider $e_i$ as a path of length $0$ for
$1 \le i \le r$. For example, an iterated extension of length three with
$\alpha(1) < \alpha(2) < \alpha(3)$ has $r = n = 3$, and its extension type
$\Gamma$ has path algebra
    \[ k[\Gamma] = \begin{pmatrix} k & k & k \\ 0 & k & k \\ 0 & 0 & k
    \end{pmatrix} = \begin{pmatrix} k \cdot e_1 & k \cdot \gamma_{12} & k
    \cdot \gamma_{12} \gamma_{13} \\ 0 & k \cdot e_2 & k \cdot \gamma_{23}
    \\ 0 & 0 & k \cdot e_3 \end{pmatrix} \]
We recall that the category $\acat r$ of Artinian $r$-pointed algebras
consists of Artinian $k$-algebras $R$ with $r$ simple modules fitting into
a diagram $k^r \to R \to k^r$, where the composition is the identity. It is
clear that if $\Gamma$ is the extension type of an iterated extension of $r$
objects in $\sfam$ of length $n$, then the path algebra $k[\Gamma]$ is an
algebra in $\acat r(n)$, where $\acat r(n)$ is the full subcategory of $\acat
r$ consisting of algebras $R$ such that $I(R)^n = 0$, with $I(R) = \ker(R \to
k^r)$. Noncommutative deformation functors are defined on the category $\acat
r$, and noncommutative deformations are parameterized by $r$-pointed Artinian
algebras; see Chapter 3 of Eriksen, Laudal, Siqveland \cite{erik-laud-siqv17}
for details.

Let $\Gamma$ be an extension type, and write $\sfam(\Gamma) = \{ X_1, \dots,
X_r \}$ for the corresponding ordered subfamily of $\sfam$. We consider the
noncommutative deformation functor
    \[ \defm{\sfam(\Gamma)}: \acat r \to \sets \]
of the finite family $\sfam(\Gamma)$ in the Abelian category $\cat$. We shall
assume, without loss of generality, that $\cat$ is the category of right
modules over an associative $k$-algebra $A$ in the rest of this section. This
is a choice of convenience, as noncommutative deformations can be computed
directly in many other Abelian $k$-categories; see Eriksen, Laudal, Siqveland
\cite{erik-laud-siqv17}.

\begin{proposition} \label{p:defm-ncdefm-itext}
There is a bijective correspondence between the noncommutative deformations
in $\defm{\sfam(\Gamma)}(k[\Gamma])$ and the set $\mathcal E(\sfam, \Gamma)$
of equivalence classes of iterated extensions of the family $\sfam$ with
extension type $\Gamma$.
\end{proposition}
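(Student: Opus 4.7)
The plan is to construct explicit maps in both directions and verify they are mutually inverse modulo the appropriate equivalences. A noncommutative deformation $M_R \in \defm{\sfam(\Gamma)}(R)$ with $R = k[\Gamma]$ may be described as a right $A$-module $M$ equipped with a compatible left $R$-action satisfying $R$-flatness and the fibre conditions $M \otimes_R k_l \cong X_l$ for each of the $r$ simple $R$-modules; see \cite{erik-laud-siqv17} for this formulation.

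From iterated extensions to deformations, given $(X,C)$ of extension type $\Gamma$, I would choose $k$-linear sections of the surjections $f_i: C_i \to C_{i-1}$, yielding a decomposition $X \cong \bigoplus_{i=1}^n K_i$ of $k$-spaces that puts the right $A$-action into block upper triangular form, with diagonal blocks recovering the $A$-module structures on $K_i = S_{\alpha(i)}$. I would then endow $X$ with a left $R$-action by letting each idempotent $e_l$ act as the projection onto $\bigoplus_{i:\alpha(i)=l} K_i$ and each generating arrow $\gamma_{i-1,i}$ act via the off-diagonal $k$-linear map $K_i \to K_{i-1}$ coming from the splittings. The block upper triangular shape of the $A$-action is exactly what forces this $R$-action to commute with it, and the path algebra relations are then immediate from matrix composition, so $X$ becomes a well-defined $R$-$A$-bimodule representing a class in $\defm{\sfam(\Gamma)}(R)$.

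For the inverse direction, I would use the linear-chain structure of $\Gamma$ to produce an $n$-step filtration of $R$ by left ideals $R = J_0 \supsetneq J_1 \supsetneq \dots \supsetneq J_n = 0$, where $J_i$ is spanned by those basis paths and idempotents whose position in the chain is strictly greater than $i$. Setting $F_i = J_i \cdot M_R$ gives a filtration of $M_R$ by $A$-submodules, and the fibre conditions, combined with the explicit description of $J_{i-1}/J_i$, identify each successive quotient $F_{i-1}/F_i$ with $S_{\alpha(i)}$, so the dual cofiltration realises $M_R$ as an iterated extension of type $\Gamma$.

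The main obstacle I anticipate is showing that the two maps descend to, and are mutually inverse on, equivalence classes. For the splitting-based construction, two choices of sections produce $R$-$A$-bimodules related by an $R$-module automorphism of the underlying vector space reducing to the identity on each $X_l$, which is precisely the equivalence relation defining $\defm{\sfam(\Gamma)}(R)$, so the forward map descends to $\extniso \Gamma$. Matching the two equivalences and verifying the mutual-inverse property then reduces to a basis-chasing argument that exploits the bijection between the basis of $k[\Gamma]$ and the contiguous subintervals of the chain $\gamma_{12} < \gamma_{23} < \dots < \gamma_{n-1,n}$; this is the step where the linear ordering of $\Gamma$ is essential, and without it additional paths would obstruct the reconstruction.
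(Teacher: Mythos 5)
There is a genuine gap, and it is structural: you identify the noncommutative deformation with the iterated extension $X$ itself, equipped with a left $k[\Gamma]$-action. An element of $\defm{\sfam(\Gamma)}(k[\Gamma])$ must be $k[\Gamma]$-flat with prescribed fibres, which forces it to be $X_{\Gamma} = ( k[\Gamma]_{ij} \otimes_k X_j )$ as a left module; as an $A$-module this has one composition factor for each basis path (including the idempotents) of $k[\Gamma]$, hence length $r + \binom{n}{2}$ in general, which exceeds $n$ as soon as $n \ge 3$ (and already for $n=2$ with $\alpha(1) \neq \alpha(2)$ the deformation has factors $X_1, X_2, X_2$ while the iterated extension has length $2$). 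So no left $k[\Gamma]$-action on $X$ can realise it as a point of $\defm{\sfam(\Gamma)}(k[\Gamma])$, and symmetrically your inverse map fails: the filtration $F_i = J_i \cdot M_R$ of the full flat module $M_R$ has successive quotients that are direct sums of several simples, not single copies of $S_{\alpha(i)}$. A secondary problem is the action you assign to the arrow $\gamma_{i-1,i}$: a map $K_i \to K_{i-1}$ \emph{coming from the splittings} is only $k$-linear, and when $\alpha(i) \neq \alpha(i-1)$ the only $A$-linear such map is zero by orthogonality, while a nonzero $k$-linear choice does not commute with the block-triangular right $A$-action; the claimed commutation is not automatic.

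The paper's proof is built precisely around this asymmetry. Going from a deformation to an iterated extension, it does not use all of $X_{\Gamma}$: it restricts to $e_s \cdot X_{\Gamma}$ (where $S_{\alpha(1)} = X_s$) and quotients by the $A$-submodule generated by the images of all \emph{non-leading} paths, leaving exactly one copy of $S_{\alpha(i)}$ for each leading path $\gamma_{12}\gamma_{23}\cdots\gamma_{i-1,i}$; the filtration by path length then supplies the cofiltration of type $\Gamma$. Conversely, from $(X,C)$ it extracts the structure maps $\psi^{ij}_a$ of the block-triangular right $A$-action on $K_n \oplus \cdots \oplus K_1$ and uses them to define a right $A$-action on the full flat module $( k[\Gamma]_{ij} \otimes_k X_j )$, distributing $\psi^{ij}_a$ along the path $\gamma_{i,i+1}\cdots\gamma_{j-1,j}$. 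Your observation that the linear order on the arrows is essential is correct, but the correspondence is a nontrivial spread-out/collapse construction between modules of different lengths, not a change of the acting ring on a fixed underlying module, and the proposal would need to be rebuilt around that.
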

\begin{proof}
Let us write $\sfam(\Gamma) = \{ X_1, \dots, X_r \}$, and let $\vect \alpha$
be the order vector corresponding to the extension type $\Gamma$. There is a
unique $s$ with $1 \le s \le r$ such that $S_{\alpha(1)} = X_s$. Any
noncommutative deformation $X_{\Gamma} \in \defm{\sfam(\Gamma)}(k[\Gamma])$
has the form $X_{\Gamma} = ( k[\Gamma]_{ij} \otimes_k X_j )$ as a left
$k[\Gamma]$-module by flatness, with a right multiplication of $A$. Let
$X_{\Gamma}(s) = e_{s} \cdot X_{\Gamma} \subseteq X_{\Gamma}$, which is
closed under right multiplication with $A$. A path in $e_{s} \cdot k[\Gamma]$
is called \emph{leading} if it has the form $\gamma_{12} \gamma_{23} \cdot
\gamma_{i-1,i}$ and \emph{non-leading} otherwise. By convention, we consider
the path $e_{s}$ as leading, and define
    \[ X_{\Gamma}^{NL}(s) = \osum_{\gamma} \; \gamma \cdot X_{\Gamma}(s) \]
where the sum is taken over all non-leading paths $\gamma$ in $e_{s} \cdot
k[\Gamma]$. Notice that $X_{\Gamma}^{NL}(s) \subseteq X_{\Gamma}(s)$ is
closed under right multiplication by $A$. We define $X = X_{\Gamma}(s) /
X_{\Gamma}^{NL}(s)$, which has an induced right $A$-module structure. As a 
$k$-linear space, we have that 
\begin{equation*}
X \cong \osum_{1 \le i \le n} \; \left( \gamma_{12} \, \gamma_{23} \dots
\gamma_{i-1,i} \right) \otimes_k S_{\alpha(i)}
\end{equation*}
with $S_{\alpha(i)} = X_l$ for some $l$ with $1 \le l \le r$, and we claim 
that there is a cofiltration $C$ of $X$ such that $(X,C)$ is an iterated 
extension of $\sfam$ with extension type $\Gamma$. In fact, we may choose 
the cofiltration $C$ dual to the filtration $F$ given by
    \[ F_j = \osum_{j+1 \le i \le n} \; \left( \gamma_{12} \, \gamma_{23}
    \dots \gamma_{i-1,i} \right) \otimes_k S_{\alpha(i)} \]
for $0 \le j \le n$, where $F_j \subseteq X$ is closed under right
multiplication with $A$. Conversely, if $(X,C)$ is a iterated extension of
$\sfam$ with extension type $\Gamma$, then it follows from the construction
in Section 3 of Eriksen, Siqveland \cite{erik-siqv17} that
    \[ X \cong K_n \oplus K_{n-1} \oplus \dots \oplus K_2 \oplus K_1 \]
as a $k$-linear vector space, with $K_i \cong S_{\alpha(i)}$ and with right
multiplication of $A$ given by
\begin{equation*}
(m_n, \dots, m_2, m_1)a = (m_n \cdot a + \sum_{i = 1}^{n-1} \psi^{in}_a(m_i),
\dots, m_2 \cdot a + \psi^{12}_a(m_1), m_1 \cdot a)
\end{equation*}
for $m_i \in K_i, \; a \in A$. Let $I_l = \{ i: \alpha(i) = l \}$ for $1 \le
l \le r$. Then the right multiplication of $A$ on $X_{\Gamma} = (
k[\Gamma]_{ij} \otimes_k X_j )$ given by
    \[ (e_l \otimes m_l) \cdot a = e_l \otimes (m_l \cdot a) + \sum_{i \in
    I_l} \; \sum_{i+1 \le j \le n} \; \left( \gamma_{i,i+1} \gamma_{i+1,i+2}
    \dots \gamma_{j-1,j} \right) \otimes \psi^{ij}_a(m) \]
for $1 \le l \le r, \; a \in A, \; m_l \in X_l$ defines a noncommutative
deformation $X_{\Gamma} \in \defm{\sfam(\Gamma)}(k[\Gamma])$.
\end{proof}

We say that $\sfam(\Gamma)$ is a swarm if $\dim_k \ext^1_A(X_i,X_j)$ if 
finite for $1 \le i,j \le r$. We shall assume that this is the case in the 
rest of this section. In this case, the noncommutative deformation functor
$\defm{\sfam(\Gamma)}$ has a miniversal object $(H,X_H)$ by general results;
see Eriksen, Laudal, Siqveland \cite{erik-laud-siqv17}, where $H$ is the
pro-representing hull in the pro-category $\cacat r$, and $X_H \in
\defm{\sfam(\Gamma)}(H)$ is the versal family. We write $X(\sfam,\Gamma) =
\mor(H,k[\Gamma])$ for the set of morphisms $\phi: H \to k[\Gamma]$ in
$\cacat r$. Note that the natural map $X(\sfam,\Gamma) \to \defm{\sfam(\Gamma)}
(k[\Gamma])$ given by $\phi \mapsto M_{\phi} = \defm{\sfam(\Gamma)}(\phi)(M_H)$ 
is surjective by the versal property.

\begin{lemma}
The set $X(\sfam,\Gamma) = \mor(H, k[\Gamma])$ is an affine algebraic variety.
\end{lemma}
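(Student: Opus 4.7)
My plan is to reduce the statement to the standard fact that the set of algebra homomorphisms between two finite-dimensional algebras is naturally a Zariski-closed subset of a finite-dimensional vector space. First I would observe that, since $k[\Gamma]$ lies in $\acat r(n)$ with $I(k[\Gamma])^n = 0$, every morphism $H \to k[\Gamma]$ in $\cacat r$ factors uniquely through the quotient $H_n := H/I(H)^n$: this follows from the pro-Artinian description $H = \varprojlim_m H/I(H)^m$ together with nilpotency of $I(k[\Gamma])$. Hence $X(\sfam,\Gamma) = \mor_{\acat r}(H_n, k[\Gamma])$ as sets.

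Next I would verify that $H_n$ is finite-dimensional over $k$. By the swarm hypothesis, $\dim_k \ext^1_A(X_i,X_j) < \infty$ for all $1 \le i,j \le r$, and by the general theory of pro-representing hulls in Eriksen, Laudal, Siqveland \cite{erik-laud-siqv17} the $k$-linear dual of $I(H)/I(H)^2$ is precisely the matric tangent space $\osum_{i,j} \ext^1_A(X_i,X_j)$ (up to transposition). A routine induction on the $I(H)$-adic filtration then shows that each $I(H)^j/I(H)^{j+1}$ is a subquotient of a tensor power of $I(H)/I(H)^2$ over $k^r$, and therefore finite-dimensional. Summing over $1 \le j < n$ gives $\dim_k H_n < \infty$.

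With $H_n$ finite-dimensional, the ambient space $\hmm_k(H_n, k[\Gamma])$ is itself a finite-dimensional $k$-vector space, isomorphic to $\aff^N_k$ with $N = \dim_k H_n \cdot \dim_k k[\Gamma]$. A $k$-linear map $\phi$ in this space belongs to $\mor_{\acat r}(H_n, k[\Gamma])$ if and only if it preserves the unit, the idempotents $e_1, \dots, e_r$, and the multiplication on a finite generating set. Writing $\phi$ in coordinates with respect to fixed $k$-bases of $H_n$ and $k[\Gamma]$, each of these conditions is a polynomial equation (linear for the idempotent/unit conditions, quadratic for the multiplicativity conditions) in the matrix entries of $\phi$. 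The common vanishing locus of these finitely many polynomials is by definition an affine algebraic variety, which identifies $X(\sfam,\Gamma)$ as claimed.

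The only delicate point I anticipate is bookkeeping in the first paragraph, namely confirming that morphisms in the pro-category $\cacat r$ into an object of $\acat r(n)$ are genuinely the same as morphisms from the finite quotient $H_n$. This is however immediate from the universal property of the inverse limit once nilpotency of $I(k[\Gamma])$ is invoked, so no further input is required; everything else is a formal translation of algebra-homomorphism data into polynomial equations.
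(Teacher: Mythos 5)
Your argument is correct, and its opening reduction --- identifying $\mor(H,k[\Gamma])$ in $\cacat r$ with $\mor(H_n,k[\Gamma])$ via the nilpotency $I(k[\Gamma])^n=0$ --- is exactly the paper's first step. Where you diverge is in how you exhibit $\mor(H_n,k[\Gamma])$ as an algebraic set. You first prove that $H_n$ is finite-dimensional over $k$ (using the swarm hypothesis together with the fact that each $I(H)^j/I(H)^{j+1}$ is a subquotient of a tensor power of the finite-dimensional space $I(H)/I(H)^2$), and then invoke the generic fact that the algebra homomorphisms between two finite-dimensional $k$-algebras form a Zariski-closed subset of the linear space $\hmm_k(H_n,k[\Gamma])\cong\aff^N$, cut out by linear unit/idempotent/augmentation conditions and quadratic multiplicativity conditions. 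The paper instead exploits the presentation of the hull: $H_n$ is a quotient of the free object $\infdef_n$, so $\mor(\infdef_n,k[\Gamma])$ is an affine space whose dimension counts only tangent directions, and $\mor(H_n,k[\Gamma])$ is the closed subset where the obstruction elements $f_{ij}(l)^n$ map to zero. Your route is more elementary and self-contained; the paper's route buys a much smaller ambient affine space whose coordinates are extension classes and whose defining equations are the obstructions, which is precisely the description that gives Corollary \ref{c:defm-itext-moduli} its deformation-theoretic content. One small repair to your write-up: multiplicativity must be imposed on all pairs of basis vectors rather than merely on a generating set, since a linear map that is multiplicative on generators need not be multiplicative on products of three or more of them; with a basis this is still a finite list of quadratic equations, so the conclusion is unaffected.
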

\begin{proof}
Since $k[\Gamma]$ is an algebra in $\acat r(n)$, any morphism $\phi: H \to
k[\Gamma]$ in $\cacat r$ can be identified with $\phi_n: H_n \to k[\Gamma]$
since $\phi(I(H)^n) = 0$. To prove that $X(\sfam, \Gamma) = \mor(H_n,
k[\Gamma])$ is an affine algebraic variety, it is enough to notice that $H_n$
is a quotient of $\infdef_n$, that $\mor(\infdef_n, k[\Gamma])$ is isomorphic
to affine space $\aff^N$, where
    \[ N = \sum_{i,j} \, \dim_k \ext^1_A(X_i,X_j) \cdot \dim_k \left(
    I(k[\Gamma])/I(k[\Gamma])^2 \right)_{ij} \]
and that $\mor(H_n, k[\Gamma]) \subseteq \mor(\infdef_n, k[\Gamma])$ is a
closed subset in the Zariski topology, with equations given by the
obstructions $f_{ij}(l)^n \in \infdef_n$.
\end{proof}

\begin{corollary} \label{c:defm-itext-moduli}
The set $\mathcal E(\sfam, \Gamma)$ of equivalence classes of iterated
extensions of the family $\sfam$ with extension type $\Gamma$ is a quotient
of the affine algebraic variety $X(\sfam,\Gamma)$, determined by the
noncommutative deformations of $\sfam(\Gamma) = \{ X_1, \dots, X_r \}$.
\end{corollary}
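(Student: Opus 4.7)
The plan is to simply chain together the two results immediately preceding the corollary. Proposition \ref{p:defm-ncdefm-itext} supplies a bijection
\[ \defm{\sfam(\Gamma)}(k[\Gamma]) \;\longleftrightarrow\; \mathcal{E}(\sfam,\Gamma), \]
so it suffices to exhibit $\defm{\sfam(\Gamma)}(k[\Gamma])$ as a set-theoretic quotient of $X(\sfam,\Gamma)$. The preceding lemma gives $X(\sfam,\Gamma) = \mor(H,k[\Gamma])$ the structure of an affine algebraic variety, and the versality of $(H,X_H)$ is precisely the statement that the evaluation map
\[ X(\sfam,\Gamma) \longrightarrow \defm{\sfam(\Gamma)}(k[\Gamma]), \qquad \phi \longmapsto X_\phi = \defm{\sfam(\Gamma)}(\phi)(X_H) \]
is surjective (this was already noted in the paragraph above the lemma). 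Composing with the bijection from Proposition \ref{p:defm-ncdefm-itext} therefore realizes $\mathcal{E}(\sfam,\Gamma)$ as the quotient of $X(\sfam,\Gamma)$ by the equivalence relation $\phi \sim \phi'$ iff $X_\phi$ and $X_{\phi'}$ are equivalent as deformations in $\defm{\sfam(\Gamma)}(k[\Gamma])$.

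I would then add one or two sentences unpacking \emph{in what sense} this quotient is determined by the noncommutative deformations of $\sfam(\Gamma)$: the variety $X(\sfam,\Gamma)$ depends only on the pro-representing hull $H$ of $\defm{\sfam(\Gamma)}$ and on $k[\Gamma]$, and the equivalence relation is the one induced by isomorphism of $k[\Gamma]$-deformations of the family $\{X_1,\dots,X_r\}$; thus neither the variety nor the quotienting relation requires any data beyond the noncommutative deformation functor of the finite subfamily $\sfam(\Gamma) \subseteq \sfam$ singled out by $\Gamma$.

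I do not expect any real obstacle: every ingredient has been proved or quoted just above. The only mild subtlety is making sure the equivalence relation on $X(\sfam,\Gamma)$ is genuinely describable by the deformation data and not by any further choice — but this follows from the fact that two morphisms $\phi,\phi' : H \to k[\Gamma]$ determine the same element of $\defm{\sfam(\Gamma)}(k[\Gamma])$ precisely when the induced deformations are isomorphic, which is the defining property of the functor $\defm{\sfam(\Gamma)}$. Hence the proof really is a one-line composition, and the statement of the corollary can be discharged by writing exactly that.
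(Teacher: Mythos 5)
Your proof is correct and follows exactly the route the paper intends: the corollary is an immediate consequence of chaining the bijection of Proposition \ref{p:defm-ncdefm-itext} with the surjection $X(\sfam,\Gamma) \to \defm{\sfam(\Gamma)}(k[\Gamma])$ coming from versality and the variety structure from the preceding lemma. The paper offers no separate argument beyond this composition, so your write-up matches (and slightly elaborates on) the intended proof.
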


\bibliographystyle{amsplain}
\bibliography{eeriksen}

\affiliationone{
   Eivind Eriksen\\
   BI Norwegian Business School, \\
   Department of Economics, \\
   N-0442 Oslo, Norway \\
   \email{eivind.eriksen@bi.no}}

\end{document}